\documentclass[12pt]{amsart}
\usepackage{url}
\usepackage{amssymb}
\usepackage[margin=2cm]{geometry}
\usepackage{verbatim}
\usepackage{hyperref}
\newtheorem{theorem}{Theorem}

\newtheorem{lemma}{Lemma}

\newcommand{\tmop}[1]{\ensuremath{\operatorname{#1}}}

\def\sumstar{\sideset{}{^*}\sum}

\numberwithin{equation}{section}
\numberwithin{lemma}{section}
\numberwithin{proposition}{section}
\numberwithin{corollary}{section}

\title[The variance of the number of lattice points in narrow sectors]{The variance of the number of lattice points in narrow sectors}

\author{Stephen Lester}
\address{Department of Mathematics, King's College London, London WC2R 2LS, UK}
\email{steve.lester@kcl.ac.uk}

\author{Ezra Waxman}
\address{Unit of Mathematics, Afeka -- The Academic College of Engineering in Tel Aviv, Mivtsa Kadesh St 38, Tel Aviv-Yafo 6998812, Israel}
\email{ezraw@afeka.ac.il}

\author{Nadav Yesha}
\address{Department of Mathematics, University of Haifa, 3103301 Haifa, Israel}
\email{nyesha@univ.haifa.ac.il}

\begin{document}

\begin{abstract}
We study the variance of the number of lattice points in smoothed narrow sectors, where the average is taken over the position of the sectors. There are three regimes corresponding to specific ranges for the open angle of the sectors and in each we obtain an asymptotic formula for the variance.
\end{abstract}

\maketitle

\section{Introduction}
\subsection{Main results}
In this article we study the variance of the number of lattice points lying in narrow sectors, where the variance is taken over the position of the sector. We shall consider the smoothed variance.

Let $W,f$ be real-valued smooth functions with rapid decay. Further assume $W$ is compactly supported on the positive reals and $f$ is even. Let $R,K$ be parameters. Here $R$ describes the radius of the circle and $\frac{\pi}{2K}$ the open angle. Throughout we use the $\varepsilon$-convention that $\varepsilon>0$ denotes a small, arbitrary positive real number that may vary from line-to-line. The constants $c_{1},c_{2},c_{3},c_{4},c_{5},c_{6},C_{0},D_{0}$,
and $C_{1}$ (depending only on $W,f$) are defined in Appendix \ref{sec:Constants} and will
be used throughout. Define the smooth counting function around $\theta \in \mathbb R/(\frac{\pi}{2} \mathbb Z)$ by
\[
F_K(\theta)=\sum_{j \in \mathbb Z}
f\left(\frac{K}{\pi/2}\left(\theta+ j \cdot \frac{\pi}{2} \right) \right)
=\frac{1}{K} \sum_{k \in \mathbb Z} \widehat f\left( \frac{k}{K}\right) e^{4i\theta k},
\]
where the last equality follows from the Poisson summation formula and the Fourier transform of $g \in L^1(\mathbb R)$ is
\[
\widehat g(\xi) =\int_{\mathbb R} g(t) e(-\xi \cdot t) \, dt, \qquad e(x)=e^{2\pi ix}.
\]
For $\alpha \in \mathbb Z^2 \setminus \{0\}$ write $\theta_{\alpha}$ for the angle between $\alpha$ and the positive $x$-axis, which is understood modulo $2\pi$.

The expected value of the lattice point count over the position of the sector is
\[
\begin{split}
\int_0^{\pi/2}
&\sum_{\alpha \in \mathbb Z^2\setminus \{0\}} F_K(\theta-\theta_{\alpha}) W\left(\frac{\lVert \alpha \rVert^2}{R^2}\right) \frac{d\theta}{\pi/2}\\
&=  \frac{1}{K} \sum_{\alpha \in \mathbb Z^2\setminus \{0\}} W\left(\frac{\lVert \alpha \rVert^2}{R^2}\right) \sum_{k\in \mathbb Z} e^{-4ik\theta_{\alpha}} \widehat f\left(\frac{k}{K} \right) \int_0^{\pi/2}
 e^{4ik\theta} \frac{d\theta}{\pi/2}=\frac{\widehat f(0)}{K} \sum_{\alpha \in \mathbb Z^2\setminus \{0\}} W\left( \frac{\lVert \alpha \rVert^2}{R^2}\right).
 \end{split}
\]
Let us consider the variance
\begin{equation}
\label{eq:Var_Def}
\tmop{Var}(R,K)=\int_0^{\pi/2} \left| \sum_{\alpha \in \mathbb Z^2\setminus \{0\}} F_K(\theta-\theta_{\alpha}) W\left(\frac{\lVert \alpha \rVert^2}{R^2}\right)-\frac{\widehat f(0)}{K} \sum_{\alpha \in \mathbb Z^2\setminus \{0\}} W\left( \frac{\lVert \alpha \rVert^2}{R^2}\right) \right|^2 \frac{d\theta}{\pi/2}.
\end{equation}

The  main results of this article are the following asymptotic formulas for $\tmop{Var}(R,K)$ in the various regimes: slowly shrinking (or non-shrinking) sectors, intermediately narrow sectors, and very narrow sectors.

\begin{theorem} \label{thm:mainthm}
Let $\eta>0$, and let $C_{1},C_{0},D_{0}$ be the constants defined in (\ref{eq:main_consts}).
\begin{enumerate}
\item For $1 \le K<R^{1-\eta}$ and any $A>0$, we have that
\[
\textup{Var}\left(R,K\right)=O\left(R^{-A}\right).
\]
\item For $R^{1+\eta}< K < R^{2-\eta}$, and for any $\varepsilon>0$, we have that
\[
\tmop{Var}(R,K)= \frac{R^2}{K} \bigg(  C_1 \log \frac{K}{R} +C_0 +O\left( \frac{R^{1+\varepsilon}}{K}\right) \bigg).
\]
\item For $K>R^{2+\eta}$, we have that
\[
\textup{Var}\left(R,K\right)=\frac{R^{2}}{K} \bigg( C_{1} \log R+D_{0}+O\left(\frac{R^2}{K}+\frac{1}{R}\right)\bigg).
\]
\end{enumerate}

\end{theorem}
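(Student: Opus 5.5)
By Parseval in $\theta$, the whole problem reduces to estimating a sum over Hecke angle characters; I would then treat the three regimes by cutting the $k$-range differently in each. Using the Fourier expansion of $F_K$ and orthogonality of $e^{4ik\theta}$ on $[0,\pi/2]$, the $k=0$ term cancels the mean and
\[
\tmop{Var}(R,K)=\frac{1}{K^2}\sum_{k\neq 0}\Big|\widehat f\Big(\frac kK\Big)\Big|^2\,\Big|\sum_{\alpha\neq 0} W\Big(\frac{\lVert\alpha\rVert^2}{R^2}\Big)e^{4ik\theta_\alpha}\Big|^2 .
\]
Writing $\alpha=a+bi$, the weight $e^{4ik\theta_\alpha}=(\alpha/|\alpha|)^{4k}$ is the Hecke Grössencharacter $\Xi_k$ on ideals of $\mathbb Z[i]$, up to the factor $4$ from units. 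So the inner sum is $4\sum_{n}r_k(n)W(n/R^2)$, where $r_k(n)=\sum_{\mathfrak a,\,N\mathfrak a=n}\Xi_k(\mathfrak a)$. By Mellin inversion this equals
\[
\frac{4}{2\pi i}\int_{(2)}\widetilde W(s)R^{2s}L(s,\Xi_k)\,ds .
\]
Everything therefore hinges on the size of $S_k(R):=\sum_n r_k(n)W(n/R^2)$, for $k\ll K^{1+\varepsilon}$ (since $\widehat f$ decays rapidly).

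\emph{Regime (1).} Since $L(s,\Xi_k)$ is entire with analytic conductor $\asymp k^2$, I shift the contour far left, or equivalently apply Poisson summation or the functional equation. The dual sum has length $\asymp k^2/R^2$, and the $\Gamma$-factor ratio against $\widetilde W$ gives decay like $(k/R)^{-A}$ once $k\le R^{1-\eta/2}$. More directly, for $|k|\le R^{1-\eta/2}$ the function $(x,y)\mapsto W((x^2+y^2)/R^2)e^{4ik\arg(x+iy)}$ is smooth at scale $R/k\ge R^{\eta/2}$ (recall $W$ is supported away from $0$). Poisson summation over $\mathbb Z^2$ then gives $S_k=O(R^{-A})$ for $k\neq0$, because the zero frequency integrates $e^{4ik\theta}$ to $0$. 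For $|k|>R^{1-\eta/2}\ge K^{1+\eta/3}$, the factor $\widehat f(k/K)$ is $O(R^{-A})$, and the trivial bound $S_k\ll R^2$ suffices. This gives (1).

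\emph{Regimes (2) and (3).} Expand $|S_k|^2=\sum_{m,n}r_k(m)r_k(n)W(m/R^2)W(n/R^2)$ and split into diagonal and off-diagonal parts.
\begin{itemize}
\item \emph{Diagonal} ($m=n$), summed over $k$ with weight $|\widehat f(k/K)|^2$: the main term comes from pairs $\alpha,\beta$ with $|\alpha|=|\beta|$ and $\theta_\beta=\pm\theta_\alpha$ mod $\pi/2$, i.e.\ $\beta$ equal to $\alpha$ or $\bar\alpha$ up to units. This gives $\frac{1}{K^2}\cdot K\widehat{|\widehat f|^2}$-type mass times $\sum_\alpha W^2$, i.e.\ order $R^2/K$.
\item \emph{Other same-norm pairs}: $\alpha\neq\beta$ with $|\alpha|=|\beta|$, whose angle difference $\phi$ is nonzero. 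Summing over $k$ and applying Poisson in $k$ gives $\frac1K\sum_j f\ast f(K(\phi+j\pi/2)/(\pi/2))$, which localizes to pairs with $|\phi|\lesssim 1/K$. For such pairs I would use the fact that the angles of the representations of $n$ are separated by $\gg 1/n\asymp R^{-2}$. When $K>R^{2+\eta}$ these contribute nothing beyond the exact-conjugate terms. For $R^{1+\eta}<K<R^{2-\eta}$ they contribute through the pair-correlation of angles, producing the $\log(K/R)$. Concretely, counting pairs of Gaussian integers $\gamma=\alpha\bar\beta$ with $|\gamma|=|\alpha|^2$ and small argument reduces to a sum $\sum r(n)^2$-like count over the relevant ranges, which yields $C_1\log$ via $\sum_{n\le x}r(n)^2\sim 4x\log x$.
\item \emph{Off-diagonal} ($m\neq n$): these are not tiny in regime (2). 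I would handle them via the pair-count reformulation: count $(\alpha,\beta)$ in the annulus with $|\theta_\alpha-\theta_\beta|\lesssim 1/K$ mod $\pi/2$. Write $\alpha\bar\beta=u+iv$ with $|v|\lesssim |u|/K$. Since $|u|\asymp R^2$, this forces $|v|\lesssim R^2/K$, and the count becomes a shifted-convolution/divisor-type sum $\sum_{v}\sum_u r$-weights. The term $v=0$ gives the conjugate and real-ratio pairs. The terms $v\neq0$, present only when $K<R^2$, give the $\log(K/R)$ main term plus constant. The error $R^{1+\varepsilon}/K$ should come from the boundary range $|v|\sim R^2/K$ and from divisor bounds.
\item For regime (3), $R^2/K<1$, so only $v=0$ survives. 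The count of collinear pairs gives $\sum_{\gamma\ \text{primitive direction}}$ sums, producing $C_1\log R+D_0$ with errors $R^2/K$ (the tail of $f\ast f$ at $K\phi$, $\phi\gtrsim R^{-2}$) and $1/R$ (lattice counting error).
\end{itemize}

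\emph{Main obstacle.} The hardest step is the asymptotic evaluation, with power saving, of the near-collinear pair count in regime (2). This is a smoothed sum $\sum_{0<|v|\ll R^2/K}\sum_u \tau$-type weights, requiring precise main terms (the log and the constant $C_0$) uniformly in the smooth weights. I would attack it by grouping pairs $\alpha=g\alpha'$, $\beta=g\beta'$ by their gcd $g$ and direction, then applying Poisson summation or a divisor-sum asymptotic with error $O(x^{1/2+\varepsilon})$, which matches the stated $R^{1+\varepsilon}/K$ relative error.
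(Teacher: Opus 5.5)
Your parts (1) and (3) match the paper. Part (1) is Poisson summation over $\mathbb Z^2$ plus smoothness at scale $R/|k|$, which is equivalent to \eqref{eq:IBP}. Part (3) is the observation that for $K>R^{2+\eta}$ only pairs with $\theta_\alpha\equiv\theta_\beta \pmod{\pi/2}$ survive, followed by a Mellin evaluation of the collinear count. (The $R^2/K$ error there comes from the subtracted mean squared, not from the tail of $f\ast f$, which is $O(K^{-A})$.)

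Part (2) has a genuine gap. First, your mechanism for the logarithm is wrong. Distinct lattice points on a circle of radius $\asymp R$ are at distance $\ge 1$, so their angular separation modulo $\pi/2$ is $\gg 1/R\gg R^{\eta}/K$. Hence the ``other same-norm pairs'' contribute only $O(R^{-A})$, and $\sum r(n)^2$ plays no role. Second, in your pair-count formulation all the difficulty sits in the $v\neq 0$ terms. Their total is $\asymp R^4/K^2$, the same size as the subtracted mean squared, and the variance is a secondary term of the difference. Concretely, the exactly collinear pairs ($v=0$) contribute $\frac{R^2}{K}(C_1\log R+D_0)$ for every $K$; this is the computation behind part (3). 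So you would have to show that the $v\neq0$ terms minus the mean squared equal $\frac{R^2}{K}\big(C_1\log\frac{K}{R^2}+c_5\big)+O(R^{3+\varepsilon}/K^2)$. That means evaluating a count of size $R^4/K^2$ with relative error $R^{-1+\varepsilon}$; it is not true that these terms simply ``give the $\log(K/R)$ main term.'' For fixed $v$ this is a sum over $u\asymp R^2$ of the number of Gaussian divisors $\alpha$ of $u+iv$ with $\lVert\alpha\rVert^2\asymp R^2$. This is the balanced range, where each $\alpha$ captures only $O(1)$ values of $u$. After Poisson in $u$ you face exponential sums $e(hvc/\lVert\alpha\rVert^2)$ over roots of $c^2\equiv -1 \pmod{\lVert\alpha\rVert^2}$, a problem of the same nature as Hooley's $\sum_{n\le x}d(n^2+1)$. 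An error $O(x^{1/2+\varepsilon})$ uniform in $v$ is not something you can quote. Your proposal asserts it without argument and never derives $C_1$ or $C_0$.

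The idea you are missing is to dualize \emph{before} localizing. The paper applies Poisson summation over $\mathbb Z^2$ first, \eqref{eq:poissonapplied}. This replaces the lattice points with $\lVert\alpha\rVert\asymp R$ by dual points with $\lVert\alpha\rVert\le K^{1+\varepsilon}/R\le R^{1-\eta+\varepsilon}$, weighted by $J_{4k}(2\pi R\lVert\alpha\rVert\sqrt{v})$. The paper then writes $J_{4k}$ via \eqref{eq:Bessel}, sums over $k$ by Poisson (Lemma~\ref{lem:bessel-id}) and integrates by parts in $v_1,v_2$. This localizes dual pairs, up to rotation by a power of $i$, to relative angle $\ll R^{-1+\varepsilon}/\lVert\alpha\rVert=o\big(1/(\lVert\alpha\rVert\lVert\beta\rVert)\big)$. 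Integrality of $a_1c_{2,b}-a_2c_{1,b}$ then forces exact collinearity, $\alpha=m\delta$, $\beta_b=n\delta$, so on the dual side no off-diagonal term remains at all. The sum over $m,n$ and primitive $\delta$ is evaluated by Mellin inversion as $\frac{1}{2\pi i}\int_{(2)}X^sE(\frac{s+1}{2})D_{V,t}(s)\widetilde g(s)\,ds$ with $X\asymp K/R$. The double pole at $s=1$, from $E$ and from $D_{V,t}(s)\approx\zeta(s)\int_0^\infty V(u)\,\frac{du}{u}$, produces $C_1\log\frac{K}{R}+C_0$ (Lemma~\ref{lem:mainterm}). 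The $O(R^{1+\varepsilon}/K)$ error comes from Taylor expansions and the shifted contour, not from any divisor-problem error term.
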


\subsection{Related results}

The asymptotic growth of the number of lattice points in narrow
sectors with a fixed position $\theta$ was studied by the second and the third authors in \cite{WaxmanYesha}.
The results depend both on the growth rate of $K$ as well as on
the rationality/irrationality type of $\tan\theta$. In particular,
if $\tan\theta$ is a Diophantine irrational (i.e., of irrationality exponent $2$), then the number of lattice points is asymptotic to the area
of the sector, as long as $K\ll R^{2-\eta}$ for some $\eta>0$.

Our analysis in the current paper does not address the behavior at the transition scales
$K\approx R$ and $K\approx R^{2}$. The scale $K = R^2$ (the local scale) is particularly
natural when considering statistics of sequences. While the variance for the number of lattice
points itself has not been directly addressed at this scale, the existence of
a (non-Poissonian) limiting distribution was established by Marklof and
Strömbergsson  \cite{MarklofStrombergsson}. The convergence of
the moments at the local scale (and particularly the variance) was further established
by El-Baz, Marklof and Vinogradov \cite{EMV} for affine lattices
$\mathbb{Z}^{2}+\xi$, when $\xi$ satisfies a certain Diophantine
condition. A closely related question concerns the distribution of
the number of visible points in sectors, which in turn is connected
to the distribution of Farey fractions, for which the convergence of all
moments at the local scale was obtained by Boca and Zaharescu \cite{BocaZaharescu}. The first named author and Wigman \cite{LesterWigman2024} as well as Bordignon and Kurlberg \cite{BordignonKurlberg2026} studied the distribution of lattice points near the circle. The former work obtains an asymptotic formula for the variance of lattice points lying in the annulus with inner radius $R-1/\sqrt{2}$ and outer radius $R+1/\sqrt{2}$ over sectors of open angle $2\pi/R$, and the latter work establishes results on the correlations of lattice points near the circle at the local scale.

Another closely related problem concerns the number of Gaussian primes
in random narrow sectors. The variance was studied by Rudnick and
the second author \cite{RudnickWaxman}, who proved an asymptotic
formula when $R^{2}=o\left(K\right)$, i.e., for very narrow sectors,
and gave a conjecture for the asymptotics when $K<R^{2-\eta}.$ In \cite{ChenEtAl},
the authors extended the conjecture to include a lower order term
(as established in our paper), and showed how
to derive their conjecture from the recipe of the ratios conjecture \cite{ConreyFarmerZirnbauer} applied
to the Hecke $L$-functions.

Our work may also be viewed as providing the variance of sums of an arithmetic function over a collection of sparse sets. There is a large literature on this topic including work on the variance of sums of divisor functions and Hecke eigenvalues over sparse sets such as short intervals \cite{Jutila1984,Ivic2009,Lester2016,Jaasaari2020,BettinConrey2021}, arithmetic progressions \cite{Blomer2008,Lu2009, LauZhao2012,FGKM2014, HarperSoundararajan2017, RodgersSoundararajan2018, Nguyen2024}, and the initial terms \cite{Carmichael2026I,Carmichael2026II}. Furthermore, the
variance and distribution of lattice points in thin annuli have been studied in \cite{BleherLebowitz1995,HughesRudnick2004,Wigman2006}.

\subsection{Initial manipulations}

Writing
\[
S(k)=\sum_{\alpha \in \mathbb Z^2\setminus \{0\}} e^{-4i\theta_{\alpha} k} W\left(\frac{\lVert \alpha \rVert^2}{R^2}\right),
\]
we see that
\begin{equation} \label{eq:var-id}
\begin{split}
\tmop{Var}(R,K)&=\frac{1}{K^2} \sum_{k_1,k_2 \in \mathbb Z \setminus \{0\}} \widehat f\left( \frac{k_1}{K}\right) \overline {\widehat f\left( \frac{k_2}{K}\right)} S(k_1) \overline{S(k_2)} \int_0^{\pi/2} e^{4i\theta(k_1-k_2)} \frac{d\theta}{\pi/2}\\
&= \frac{1}{K^2} \sum_{k \in \mathbb Z \setminus \{0\}} \left|\widehat f\left( \frac{k}{K}\right)\right|^2 |S(k)|^2.
\end{split}
\end{equation}

To analyze $S(k)$ when $K<R^{2-\eta}$ we apply the Poisson summation formula.
Given a smooth rapidly decaying function $g$ supported on the positive reals and $\ell$ an integer, define for $\xi \ge 0$
\[
\mathcal B_{\ell}(g)(\xi) = \int_0^{\infty} g(y) J_{\ell}(2\pi \sqrt{\xi y}) \, dy,
\]
where $ J_{\ell}$ is the Bessel function of the first kind.
Repeatedly integrating by parts, one can show that
\begin{equation} \label{eq:IBP}
\mathcal B_{\ell}(g)(\xi) \ll_g \frac{1+|\ell|^A}{1+\xi^{A/2}},
\end{equation}
for any $A>0$ (cf. \cite[Lemma 9.3]{LesterWigman2024}, \cite[Lemma 2.4]{Carmichael2026I}).
Let us define
\[
\lambda_{4k}(n)=\sum_{\substack{ \alpha \in \mathbb Z^2 \\ \lVert \alpha \rVert^2 =n }} e^{4i \theta_{\alpha} k}.
\]
The Poisson summation formula, see \cite[Lemma 9.2]{LesterWigman2024}, yields the following result, for $k \neq 0$,
\[
S(k)=\sum_{n \ge 1} \lambda_{4k}(n) W\left(\frac{n}{R^2} \right) = \pi R^2 \sum_{n \ge 1} \lambda_{4k}(n) \mathcal B_{4k}(W)(nR^2).
\]
Hence, we obtain
\begin{equation} \label{eq:poissonapplied}
\tmop{Var}(R,K)= \frac{\pi^2 R^4}{K^2}  \sum_{k \in \mathbb Z \setminus \{0\}}  \left|\widehat f\left( \frac{k}{K}\right)\right|^2
\left| \sum_{n \ge 1} \lambda_{4k}(n) \mathcal B_{4k}(W)(nR^2) \right|^2.
\end{equation}

The last identity already implies the claimed bound for the variance in the regime $1\le K<R^{1-\eta}$, that is, for slowly shrinking (or non-shrinking) sectors.

\begin{proof}[Proof of Theorem \ref{thm:mainthm}, part (1)]

Assume that $1\le K<R^{1-\eta}$. The estimate $\textup{Var}\left(R,K\right)=O\left(R^{-A}\right)$
then follows by \eqref{eq:poissonapplied}, \eqref{eq:IBP}, and the rapid decay of $\widehat{f}$.
\end{proof}

\subsection{Discussion of the proof and organization of the paper}
The bulk of the paper is dedicated to proving the second part of Theorem \ref{thm:mainthm}, which concerns the intermediate regime $R^{1+\eta}< K < R^{2-\eta}$.  To compute $\tmop{Var}(R,K)$ we start with the identity \eqref{eq:poissonapplied} in which we have already expanded the condition that our lattice points lie in a narrow sector by a Fourier series, integrated over the position of the sector using Parseval's identity, and then transformed the sum over lattice points through an application of the Poisson summation formula. This provides an exact formula for $\tmop{Var}(R,K)$ and by \eqref{eq:IBP} we are left with estimating, essentially, the following second moment
\[
\sum_{1\le|k|\le K} \bigg|\sum_{1\le n \le \frac{K^{2}}{R^2}} \lambda_{4k}(n) J_{4k}(2\pi R \sqrt{n}) \bigg|^2.
\]
Since $K<R^{2-\eta}$ the length of the $n$-sum is $K^2/R^2<K$, which is shorter than the length of the $k$-sum which has length $2K$. This suggests that the main contribution will come from a diagonal term consisting of pairs of lattice points lying on the same line through the origin.  However, the relevant Bessel functions, $J_{4k}(2\pi R\sqrt{n})$, occur at the transition regime where their arguments, $2\pi R\sqrt{n}$, are approximately the same size as their indices, $4k$, which complicates the analysis.

To overcome this difficulty, instead of using asymptotic expansions of the Bessel functions, our strategy starts with the following identity
\begin{equation}
\label{eq:Bessel}
J_{k}(2\pi x)=\int_{-1/2}^{1/2}  e(-x \sin (2\pi t)) e(kt) \, dt.
\end{equation}
After using this representation we then execute the sum over $k$ by another application of the Poisson summation formula. This yields an exact identity in terms of an oscillatory integral, which is established in Section \ref{sec:summing}. Localizing this integral, in Section \ref{sec:collinear} we show that the main contribution arises from pairs of $\mathbb Z^2$-lattice points $\alpha, \beta$ with relative angle at most $o( \frac{1}{\lVert \alpha \rVert \lVert \beta \rVert})$ and since these are $\mathbb Z^2$-lattice points integrality forces them to lie on the same line through the origin, up to rotation by an appropriate multiple of $\pi/2$. This allows us to express these $\mathbb Z^2$-lattice points $\alpha, \beta$ as $\alpha =m \delta$, $\beta=n \delta$ (up to a rotation by a multiple of $\pi /2$) where $m,n$
 are integers and $\delta$ is a primitive, non-zero $\mathbb Z^2$-lattice point.

 We are left with an arithmetic sum over primitive lattice points $\delta$ and integers $m,n$, which we estimate in Section \ref{sec:mainterm} using Mellin inversion and a contour integration argument. In the proof of Lemma \ref{lem:mainterm} we express the primitive lattice point sum in terms of the Epstein zeta-function and the $m,n$ sum by another Dirichlet series, which we relate to the Riemann zeta-function. Each Dirichlet series has a simple pole at $s=1$, which leads to a double pole that accounts for the logarithm appearing in the main term of our theorem.

 In Section \ref{sec:proof} we combine the estimates of the preceding sections to complete the proof of the second part of Theorem \ref{thm:mainthm}.


Finally, we prove in Section \ref{sec:VeryNarrowSec} the third part of Theorem \ref{thm:mainthm}. Here, the idea is that after expanding the square in the definition \eqref{eq:Var_Def} of the variance, the condition $K>R^{2+\eta}$ implies that up to a negligible error term, the entire contribution for the second moment comes from the diagonal terms where $\theta_{\alpha} = \theta_{\beta}$. Applying the Mellin inversion theorem, we express the sum in question as a double contour integral, which is then evaluated by shifting the contours and collecting the residues.

\section{The intermediate regime, preparatory results}
\subsection{Summing over frequencies} \label{sec:summing}
 Expanding the square in \eqref{eq:poissonapplied} we see that we need to estimate a quantity of the form
\[
\sum_{k \in \mathbb Z} g\left( \frac{k}{K}\right) e^{4ik( \theta_{\alpha}-\theta_{\beta})} J_{4k}(2\pi R\lVert \alpha \rVert \sqrt{v_1})J_{4k}(2 \pi R\lVert \beta \rVert \sqrt{v_2}),
\]
where $g$ is a smooth, rapidly decaying function. An exact identity for this quantity is established in the following lemma, which is closely related to \cite[Lemma 9.6]{LesterWigman2024} and \cite[Lemma 5.8]{Iwaniec1997}.

\begin{lemma} \label{lem:bessel-id} Let $x,y \in \mathbb R$ and $g$ satisfy $g(u), \widehat g(u) \ll (1+|u|)^{-1-\delta}$ for some $\delta>0$. Let $K>0$ and $\theta \pmod{ \tfrac{1}{4}}$. Then
\begin{equation}
\begin{split}
&\sum_{k \in \mathbb Z} e(4k \theta) J_{4k}(2\pi x) J_{4k}(2\pi y) g \bigg( \frac{k}{K} \bigg)\\
&=\frac{1}{4}  \sum_{0\le b<4}
\int_{-1/2}^{1/2} \int_{\mathbb R} e(-x \sin(2\pi t_2)+y \sin(2\pi(\tfrac{t_1}{4K}+t_2+\theta+\tfrac{b}{4}))) \widehat g(t_1) \, dt_1 dt_2.
\end{split}
\end{equation}

\end{lemma}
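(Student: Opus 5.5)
Our plan is to insert the integral representation \eqref{eq:Bessel} for both Bessel factors and then carry out the sum over $k$ by Poisson summation. Using \eqref{eq:Bessel} with index $4k$,
\[
J_{4k}(2\pi x)=\int_{-1/2}^{1/2} e(-x\sin(2\pi t_2))e(4kt_2)\,dt_2,\qquad J_{4k}(2\pi y)=\int_{-1/2}^{1/2} e(-y\sin(2\pi s))e(4ks)\,ds .
\]
Since $J_{4k}$ is real, we may replace the second factor by its complex conjugate. This gives $\int e(y\sin(2\pi s))e(-4ks)\,ds$. The left-hand side then becomes
\[
\int_{-1/2}^{1/2}\int_{-1/2}^{1/2} e(-x\sin(2\pi t_2)+y\sin(2\pi s))\sum_{k}g(k/K)\,e\big(4k(t_2+\theta-s)\big)\,ds\,dt_2 .
\]
The interchange of the $k$-sum with the integrals is justified because $g(u)\ll(1+|u|)^{-1-\delta}$ makes the $k$-sum absolutely convergent, and the integrands are bounded.

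Next we apply Poisson summation to the inner sum:
\[
\sum_k g(k/K)e(k u)=K\sum_{m}\widehat g\big(K(m-u)\big),
\]
which is valid since both $g$ and $\widehat g$ are $\ll (1+|u|)^{-1-\delta}$. Here $u=4(t_2+\theta-s)$. We then substitute $t_1=K(m-4(t_2+\theta-s))$ in the $s$-integral, i.e.
\[
s=t_2+\theta-\tfrac{m}{4}+\tfrac{t_1}{4K},\qquad ds=\tfrac{dt_1}{4K}.
\]
The factor $K$ cancels against $1/K$, leaving the prefactor $\tfrac14$. The integrand $e(y\sin(2\pi s))$ is $1$-periodic in $s$, so $\sin(2\pi s)=\sin\big(2\pi(\tfrac{t_1}{4K}+t_2+\theta-\tfrac m4)\big)$. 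Writing $-m\equiv b \pmod 4$ with $0\le b<4$, this equals $\sin\big(2\pi(\tfrac{t_1}{4K}+t_2+\theta+\tfrac b4)\big)$.

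The main bookkeeping step, and the one to be careful about, is the range of $t_1$. As $s$ runs over a unit interval and $m$ runs over a fixed residue class $m\equiv -b\pmod 4$, the variable $t_1$ runs over disjoint intervals of length $4K$ (steps of $4$ in $m$ shift $t_1$ by $4K$). These intervals tile $\mathbb R$. Hence for each $b$ the sum over $m\equiv -b$ combined with the $s$-integral is exactly $\int_{\mathbb R}\dots\widehat g(t_1)\,dt_1$. Summing over the four residue classes produces $\sum_{0\le b<4}$.

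Absolute convergence, from $\widehat g\ll(1+|u|)^{-1-\delta}$, justifies exchanging the $m$-sum with the $s$-integral and all the rearrangements. The result is independent of the representative of $\theta\bmod \tfrac14$, as it must be, because shifting $\theta$ by $\tfrac14$ just permutes $b$. This yields the stated identity.
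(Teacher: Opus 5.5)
Your proof is correct and follows essentially the same route as the paper: you insert \eqref{eq:Bessel} for both Bessel factors, apply Poisson summation in $k$, and unfold the dual sum against the compact $s$-integral into an integral over $\mathbb R$, and your justifications of the interchanges are sound. The differences are cosmetic. You obtain the sum over $b$ by splitting the dual variable $m$ into residue classes mod $4$, and you conjugate the second Bessel factor to get the sign. The paper instead first detects $4\mid k$ via $\frac14\sum_{0\le b<4} e(kb/4)$, applies Poisson summation with weight $g(k/(4K))$, and gets the sign of the $y\sin$ term from the substitution $u=j-t_1-t_2-\theta-\frac b4$.
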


\begin{proof}
Starting with the well-known formula \eqref{eq:Bessel}
with $k \in \mathbb Z$
we see that
\[
\begin{split}
&\sum_{k \in \mathbb Z} e(4k \theta) J_{4k}(2\pi x) J_{4k}(2\pi y) g \bigg( \frac{k}{K} \bigg) \\
&=\frac{1}{4} \sum_{0 \le b < 4}  \int_{-1/2}^{1/2} \int_{-1/2}^{1/2} e(-x \sin(2\pi t_2)-y\sin(2\pi t_1)) \sum_{k \in \mathbb Z} e(k(t_1+t_2+\theta+\tfrac{b}{4} )) g \bigg( \frac{k}{4K}\bigg) dt_1 dt_2 \\
&= K \sum_{0 \le b < 4}  \int_{-1/2}^{1/2} \int_{-1/2}^{1/2} e(-x \sin(2\pi t_2)-y\sin(2\pi t_1)) \sum_{j \in \mathbb Z} \widehat g(4K(j-(t_1+t_2+\theta+\tfrac{b}{4}))) dt_1 dt_2.
\end{split}
\]
In the integral with respect to $t_1$ make the substitution $u=j-t_1-t_2-\theta-\frac{b}{4}$ to obtain 
\[
\begin{split}
 &\sum_{j \in \mathbb Z} \int_{-1/2}^{1/2} e(-y \sin(2\pi t_1)) \widehat g(4K(j-(t_1+t_2+\theta+\tfrac{b}{4}))) dt_1\\
 &\qquad \qquad \qquad=
  \sum_{j \in \mathbb Z} \int_{-1/2+j-t_2-\theta-\frac{b}{4}}^{1/2+j-t_2-\theta-\frac{b}{4}} e(y \sin(2\pi (u+t_2+\theta+\tfrac{b}{4})) \widehat g(4Ku) du \\
  &\qquad \qquad \qquad=\int_{\mathbb R} e(y \sin(2\pi (u+t_2+\theta+\tfrac{b}{4})) \widehat g(4Ku) du.
\end{split}
\]
Making the change of variables $t_1=4Ku$ completes the proof.
\end{proof}


\subsection{Reducing to collinear lattice points} \label{sec:collinear}

For positive real numbers $v_1,v_2$, let
\begin{equation} \label{eq:sigmadef}
\begin{split}
\Sigma_{K,R}(v_1,v_2)
={}&
\sum_{\substack{m\in\mathbb N,\ n\in\mathbb Z\\
1\le m\le |n|\le \frac{K^{1+\varepsilon}}{R}}}
\left(2-1_{m=|n|}\right)
\sumstar_{\substack{\delta\in\mathbb Z^2\\
1\le \lVert\delta\rVert\le \frac{K^{1+\varepsilon}}{|n|R}}} \,
\int\limits_{\substack{t \in[-1/2,1/2] \\ \lVert t \rVert_{\frac12 \mathbb Z} \le \frac{1}{R^{1-\varepsilon}m \lVert \delta \rVert}}}
e\left(R\lVert\delta\rVert
\left(n\sqrt{v_2}-m\sqrt{v_1}\right)\sin(2\pi t)\right)
\\
&\hspace{25mm}\times
|\widehat f|^2\left(
\frac{\pi nR\lVert\delta\rVert\sqrt{v_2}}{2K}
\cos(2\pi t)\right)\,dt,
\end{split}
\end{equation}
where $\lVert\cdot\rVert_{\frac12\mathbb Z}$ denotes the distance
to the nearest half-integer, $1_{m=|n|}$ is one if $m=|n|$ and otherwise is zero, and $\sumstar$ designates that the sum is over primitive lattice points.
We prove the following lemma.

\begin{lemma} \label{lem:poissonagain}
Let $\eta>0$.
For $R^{1+\eta}< K< R^{2-\eta}$, we have for any $\varepsilon>0$ that
\[
\tmop{Var}(R,K)
=
\frac{\pi^2R^4}{K^2}
\int_{\mathbb R^2}W(v_1)W(v_2)
\Sigma_{K,R}(v_1,v_2)\,dv_1\,dv_2
+O\left(\frac{R^{2+\varepsilon}}{K^2}\right).
\]
\end{lemma}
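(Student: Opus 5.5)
Starting from \eqref{eq:poissonapplied}, I write $\mathcal B_{4k}(W)(nR^2)=\int_0^\infty W(v)J_{4k}(2\pi R\sqrt{nv})\,dv$ and expand the square. The inner $n$-sum becomes a double sum over lattice points $\alpha,\beta\in\mathbb Z^2\setminus\{0\}$, since $\lambda_{4k}(n)$ is itself a sum over $\lVert\alpha\rVert^2=n$, and the phase is $e^{4ik(\theta_\alpha-\theta_\beta)}$. I first truncate to $\lVert\alpha\rVert,\lVert\beta\rVert\le K^{1+\varepsilon}/R$ using \eqref{eq:IBP} together with the rapid decay of $\widehat f$, so only $|k|\le K^{1+\varepsilon}$ matters. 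The discarded tail is $O(R^{-A})$.

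For fixed $\alpha,\beta$ I then execute the $k$-sum with Lemma \ref{lem:bessel-id}, taking $g=|\widehat f|^2$ (which is even, because $f$ is even and real), $\theta=(\theta_\alpha-\theta_\beta)/2\pi$, $x=R\lVert\alpha\rVert\sqrt{v_1}$ and $y=R\lVert\beta\rVert\sqrt{v_2}$. Here $\widehat g=f*f$ is rapidly decaying, so the lemma applies. This gives, for each pair, an integral over $t_2\in[-1/2,1/2]$ and $t_1\in\mathbb R$ of
\[
e\bigl(-x\sin(2\pi t_2)+y\sin(2\pi(\tfrac{t_1}{4K}+t_2+\theta+\tfrac b4))\bigr)\,\widehat g(t_1).
\]
Since $y/K\ll K^{\varepsilon}$, the term $t_1/4K$ can be linearised: I Taylor expand $\sin(2\pi(t_2+\theta'+t_1/4K))$ and integrate in $t_1$. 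The linear term reproduces $g\bigl(\tfrac{y}{2K}\cos(2\pi(t_2+\theta'))\bigr)$, which with the normalisation explains the factor $|\widehat f|^2(\pi nR\lVert\delta\rVert\sqrt{v_2}\cos(2\pi t)/2K)$. The quadratic remainder is of size $y/K^2\ll R^{-1+\varepsilon}$ times a polynomial in $t_1$, so after summation it should only feed the error term.

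The central step is localisation in $t_2$. The phase is $-x\sin(2\pi t_2)+y\sin(2\pi(t_2+\phi))$ with $\phi=\theta+b/4$, and its $t_2$-derivative is $2\pi(-x\cos 2\pi t_2+y\cos 2\pi(t_2+\phi))$. The oscillatory integral is small unless this derivative is small and the relevant structure is degenerate. Repeated integration by parts, using a smooth partition of $[-1/2,1/2]$, shows that contributions are negligible unless either $\lVert\phi\rVert_{\frac12\mathbb Z}\lesssim R^{\varepsilon}/(R\lVert\alpha\rVert\lVert\beta\rVert)$, or the stationary points give a contribution of size $(x+y)^{-1/2}$.

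Case (a) is where the integrality argument enters. If $\beta$ rotated by $b\pi/2$ makes an angle $\ll R^{\varepsilon-1}/(\lVert\alpha\rVert\lVert\beta\rVert)$ with $\alpha$, then $|\det(\alpha,\beta')|=\lVert\alpha\rVert\lVert\beta\rVert|\sin\angle|<1$, which forces collinearity. We can then write $\alpha=m\delta$ and $\beta'=n\delta$ with $\delta$ primitive. Summing over $b$ and over the four rotations of $\delta$ absorbs the $\frac14$. Symmetrising in $m\le|n|$ (which explains the weight $2-1_{m=|n|}$, with the $m,n$ roles justified by symmetry after the $v_1\leftrightarrow v_2$ swap) gives exactly $\Sigma_{K,R}$. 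The restriction $\lVert t\rVert_{\frac12\mathbb Z}\le 1/(R^{1-\varepsilon}m\lVert\delta\rVert)$ comes from a second integration by parts in $t$. For collinear pairs the phase is $(y-x)\sin 2\pi t$, whose derivative is $\gg R\,m\lVert\delta\rVert$ times $|\cos|$ away from the half-integers. This holds as long as $|n\sqrt{v_2}-m\sqrt{v_1}|$ is not tiny, and I will integrate over $v_1,v_2$ first to secure that. Alternatively, the complement can be estimated directly and included in the error.

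Case (b), the stationary phase contribution for non-collinear pairs, is the main obstacle. Each pair contributes about $(R\lVert\alpha\rVert)^{-1/2}(R\lVert\beta\rVert)^{-1/2}$ times an oscillating factor $e(\pm\sqrt{x^2+y^2-2xy\cos 2\pi\phi})$. Bounding this trivially over all pairs is too lossy. Instead I would keep the $v_1,v_2$ integrals: integrating by parts in $v_1,v_2$ against the smooth $W$ gains a factor $(R\lVert\alpha\rVert)^{-A}$ unless the stationary value is nearly independent of $v$, which fails off the collinear set. The target is a total of $O(R^{2+\varepsilon}/K^2)$ after the prefactor $\pi^2R^4/K^2$, i.e.\ an inner contribution of $O(R^{-2+\varepsilon})$, which is what the boundary of the localisation should give.
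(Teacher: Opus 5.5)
There is a genuine gap: your mechanism for isolating collinear pairs does not work, and the ``case (b)'' it leaves behind is never resolved. At fixed $v_1,v_2,t_1$ the phase $-x\sin(2\pi t_2)+y\sin(2\pi(t_2+\phi+\tfrac{t_1}{4K}))$ is a single sinusoid $D\sin(2\pi t_2+\psi)$ with $D^2=x^2+y^2-2xy\cos(2\pi(\phi+\tfrac{t_1}{4K}))$. So the full $t_2$-integral is exactly $J_0(2\pi D)$, and integrating by parts in $t_2$ never makes any pair negligible, because the stationary points are always there. Your case (a) threshold $R^{\varepsilon-1}/(\lVert\alpha\rVert\lVert\beta\rVert)$ is not derived from anything. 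Tellingly, your collinearity step never uses $K<R^{2-\eta}$, which is indispensable.

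For the same reason, the restriction $\lVert t\rVert_{\frac12\mathbb Z}\le 1/(R^{1-\varepsilon}m\lVert\delta\rVert)$ cannot come from integrating by parts in $t$. The derivative $2\pi(y-x)\cos(2\pi t)$ is largest near the half-integers and vanishes at $t=\pm\frac14$, which is precisely the region you want to discard. That restriction is essential. There are roughly $(K/R)^2$ collinear triples $(m,n,\delta)$, each with a $t_1$-Taylor error $\ll R^{\varepsilon}/K$, and only the $t$-measure $\ll 1/(Rm\lVert\delta\rVert)$ brings the total down to $O(R^{2+\varepsilon}/K^2)$. So the linearisation in $t_1$ must be done after localising, not before. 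Also, the quadratic remainder is $\ll K^{-1+\varepsilon}$, since $y\ll K^{1+\varepsilon}$; your bound $R^{-1+\varepsilon}$ would only give an error $O(R^{1+\varepsilon}/K)$.

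The missing idea is to integrate in $v_1,v_2$ first, at fixed $t_1,t_2$. With $u=\sqrt{v_1}$ and $V(u)=2uW(u^2)$ one has $\int W(v_1)\,e(-R\lVert\alpha\rVert\sqrt{v_1}\sin 2\pi t_2)\,dv_1=\widehat V(R\lVert\alpha\rVert\sin 2\pi t_2)\ll(1+R\lVert\alpha\rVert|\sin 2\pi t_2|)^{-A}$, and likewise in $v_2$. So up to $O(R^{-A})$ only the region $\lVert t_2\rVert_{\frac12\mathbb Z}\le R^{\varepsilon-1}/\lVert\alpha\rVert$, $\lVert t_2+\phi+\tfrac{t_1}{4K}\rVert_{\frac12\mathbb Z}\le R^{\varepsilon-1}/\lVert\beta\rVert$, $|t_1|\le R^{\varepsilon}$ survives. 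For $\lVert\alpha\rVert\le\lVert\beta\rVert$ the triangle inequality gives $\lVert\phi\rVert_{\frac12\mathbb Z}\ll R^{\varepsilon-1}/\lVert\alpha\rVert$. Since $\lVert\beta\rVert\le K^{1+\varepsilon}/R\le R^{1-\eta+\varepsilon}$ (this is where $K<R^{2-\eta}$ enters), this is $o(1/(\lVert\alpha\rVert\lVert\beta\rVert))$. Hence the integer $\det(\alpha,\beta_b)$, with $\beta_b$ the rotation of $\beta$ by $b\pi/2$, must vanish.

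Every non-collinear pair is thus negligible outright, with no stationary-phase term to control. The same bound is what produces the $t$-restriction in $\Sigma_{K,R}$.

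Your last paragraph reaches for integration by parts in $v$, but applying it after stationary phase to $e(\pm D(v))$ has two problems. It is harder: the asymptotics of $J_0(2\pi D)$ break down where $D$ is small, which is exactly where the collinear main term lives. It is also mis-stated: for collinear pairs $D=R\lVert\delta\rVert\,|n\sqrt{v_2}-m\sqrt{v_1}|$ is far from independent of $v$. Those pairs survive because of the thin region $D\lesssim 1$, not because the phase is flat in $v$.
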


\begin{proof}
By \eqref{eq:IBP}, \eqref{eq:poissonapplied}, the rapid decay of $\widehat f$, and the compact
support of $W$, we see that
\[
\begin{split}
\tmop{Var}(R,K)
={}&
\frac{\pi^2R^4}{K^2}
\sum_{k\in\mathbb Z}
\left|\widehat f\left(\frac{k}{K}\right)\right|^2
\left|
\sum_{1\le n\le K^{2+\varepsilon}/R^2}
\lambda_{4k}(n)\mathcal B_{4k}(W)(nR^2)
\right|^2
+O(R^{-A})
\end{split}
\]
for any $A>0$. Here we have added the $k=0$ term to the right-hand side, which is harmless, as it is easily seen to be $O(R^{-A})$ by \eqref{eq:IBP}. Applying Lemma \ref{lem:bessel-id} with
\[
x=R\lVert\alpha\rVert\sqrt{v_1},
\qquad
 y=R\lVert\beta\rVert\sqrt{v_2},
\qquad
2\pi\theta=\theta_\alpha-\theta_\beta,
\]
and $g=|\widehat f|^2$ we obtain
\begin{equation} \label{eq:B}
\begin{split}
&\tmop{Var}(R,K)
={}
\frac{\pi^2R^4}{4K^2}
\int_{\mathbb R^2}W(v_1)W(v_2)
\sum_{\substack{\alpha,\beta\in\mathbb Z^2\\
1\le \lVert\alpha\rVert,\lVert\beta\rVert
\le K^{1+\varepsilon}/R}}
\sum_{0\le b<4}
\int_{-1/2}^{1/2}\int_{\mathbb R}(f\ast f)(t_1)
\\
&
\times e\left(
-x\sin(2\pi t_2)
+y\sin\left(2\pi\left(
\frac{t_1}{4K}+t_2+\theta+\frac b4
\right)\right)\right)
\,dt_1\,dt_2\,dv_1\,dv_2
+O(R^{-A})
\end{split}
\end{equation}
for any $A>0$.

Integrating by parts repeatedly, we see that
\begin{equation}\label{eq:firstIBP-new}
\int_0^\infty W(v_1)
e\left(-x\sin(2\pi t_2)\right)\,dv_1
\ll_A
\left(1+R\lVert\alpha\rVert|\sin(2\pi t_2)|\right)^{-A}
\end{equation}
for any $A>0$. Similarly,
\begin{equation}\label{eq:secondIBP-new}
\begin{split}
&\int_0^\infty W(v_2)
e\left(y\sin\left(2\pi\left(
\frac{t_1}{4K}+t_2+\theta+\frac b4
\right)\right)\right)\,dv_2
\\
&\qquad\ll_A
\left(1+R\lVert\beta\rVert
\left|\sin\left(2\pi\left(
\frac{t_1}{4K}+t_2+\theta+\frac b4
\right)\right)\right|\right)^{-A}
\end{split}
\end{equation}
for any $A>0$.

Up to a negligible error of size $O(R^{-A})$, by
\eqref{eq:firstIBP-new} we may restrict to
\begin{equation}\label{eq:D}
t_2\in[-1/2,1/2],
\qquad
\lVert t_2\rVert_{\frac12\mathbb Z}
\le \frac{1}{R^{1-\varepsilon}\lVert\alpha\rVert}.
\end{equation}
By \eqref{eq:secondIBP-new}, we may
also restrict to
\[
\left\lVert
\frac{t_1}{4K}+t_2+\theta+\frac b4
\right\rVert_{\frac12\mathbb Z}
\le \frac{1}{R^{1-\varepsilon}\lVert\beta\rVert},
\]
also up to a negligible error of size $O(R^{-A})$. Additionally,
$|t_1|\le R^\varepsilon$ up to a negligible error.

By symmetry, it suffices to consider $\lVert \alpha \lVert \le \lVert \beta \rVert$ and weight the terms $\lVert \alpha \lVert \neq \lVert \beta \rVert$ by a factor of $2$. Hence, since
\[
R^{1-\varepsilon}\lVert\alpha\rVert\le K,
\qquad
\lVert\alpha\rVert\le \lVert\beta\rVert,
\]
we conclude that
\begin{equation} \label{eq:sizebd}
\left\lVert\theta+\frac b4\right\rVert_{\frac12\mathbb Z}
\ll \frac{1}{R^{1-\varepsilon}\lVert\alpha\rVert}
=o\left(\frac{1}{\lVert\alpha\rVert\lVert\beta\rVert}\right),
\end{equation}
where in the last deduction we also used that
\[
\lVert\beta\rVert\le \frac{K^{1+\varepsilon}}{R}
\le R^{1-\eta+\varepsilon}
\]
since $K<R^{2-\eta}$.

Write $\beta_b$ for the clockwise rotation of $\beta$
through the angle $b\pi/2$. Also write
\[
\alpha=(a_1,a_2),
\qquad
\beta_b=(c_{1,b},c_{2,b}).
\]
Noting
\[
2\pi\left(\theta+\frac b4\right)
=\theta_\alpha-\theta_\beta+\frac{b\pi}{2}
=\theta_\alpha-\theta_{\beta_b},
\]
we will now infer that
\[
\theta_\alpha\equiv\theta_{\beta_b}\pmod{\pi}.
\]
To see this, observe that if
\[
\theta_\alpha\not\equiv\theta_{\beta_b}\pmod{\pi},
\]
then
\begin{equation}
\label{angles_ineq}
\left|\sin(\theta_\alpha-\theta_{\beta_b})\right|
=
\frac{|a_1c_{2,b}-a_2c_{1,b}|}
{\lVert\alpha\rVert\lVert\beta\rVert}
\ge
\frac{1}{\lVert\alpha\rVert\lVert\beta\rVert},
\end{equation}
which contradicts \eqref{eq:sizebd}. Here we have used that $a_1c_{2,b}-a_2c_{1,b}$ is an integer and if it is non-zero it must be $\ge 1$ in absolute value.

Since $\theta_\alpha\equiv\theta_{\beta_b}\pmod{\pi}$, we have
\[
\theta_\alpha\equiv \theta_{\beta_b} \text{, or, } \theta_{\beta_b} +\pi\pmod{2\pi}
\]
and we may write
\begin{equation} \label{eq:C}
\alpha=m\delta,
\qquad
\beta_b=n\delta,
\end{equation}
where $m\in\mathbb N$, $n\in\mathbb Z\setminus\{0\}$, and
$\delta=(d_1,d_2)$ with $\gcd(d_1,d_2)=1$. Here the sign of $n$ records whether $\beta_b$ or $-\beta_b$ has the same angle as $\alpha$. Moreover,
\[
1\le m\le |n|\le \frac{K^{1+\varepsilon}}{R}.
\]

Since
\[
2\pi\left(\theta+\frac b4\right)\equiv0\pmod{\pi},
\]
we have that
\begin{equation} \label{eq:trig}
\begin{split}
\sin\left(2\pi\left(
\frac{t_1}{4K}+t_2+\theta+\frac b4
\right)\right)=
\sin\left(2\pi\left(\frac{t_1}{4K}+t_2\right)\right)
\cos\left(2\pi\left(\theta+\frac b4\right)\right).
\end{split}
\end{equation}
Also, since
\[
\beta_b=n \delta
\]
we have that
\begin{equation} \label{eq:simple-id}
\lVert\beta\rVert
\cos\left(2\pi\left(\theta+\frac b4\right)\right)
=n\lVert\delta\rVert.
\end{equation}
The right-hand side of \eqref{eq:simple-id} does not depend on $b$ and for each $(m,n,\delta)$ there are four corresponding tuples $(\alpha,\beta,b)$ so that the factor of $1/4$ appearing on the right-hand side of \eqref{eq:B} cancels after making this substitution.

Using \eqref{eq:trig}, \eqref{eq:simple-id}, and a Taylor expansion, we see that
\begin{equation} \label{eq:colinear}
\begin{split}
&\lVert\beta\rVert R\sqrt{v_2}
\sin\left(2\pi\left(
\frac{t_1}{4K}+t_2+\theta+\frac b4
\right)\right)
\\
&\qquad=
 n\lVert\delta\rVert R\sqrt{v_2}
\sin\left(2\pi\left(\frac{t_1}{4K}+t_2\right)\right)
\\
&\qquad=
 n\lVert\delta\rVert R\sqrt{v_2}\sin(2\pi t_2)
+
\frac{\pi n\lVert\delta\rVert R\sqrt{v_2}}{2K}
 t_1\cos(2\pi t_2)
+
O\left(\frac{|n|\lVert\delta\rVert  R^{1+\varepsilon}}{K^2}\right).
\end{split}
\end{equation}
Using \eqref{eq:colinear} and that $|n|\lVert \delta \rVert <K^{1+\varepsilon}/R$ we obtain
\begin{equation} \label{eq:taylorapplied}
\begin{split}
&\int_{\mathbb R}
e\left(y
\sin\left(2\pi\left(
\frac{t_1}{4K}+t_2+\theta+\frac b4
\right)\right)\right)
(f \ast f)(t_1)\,dt_1
\\
&\qquad=
e\left(n\lVert\delta\rVert R\sqrt{v_2}
\sin(2\pi t_2)\right)
 |\widehat f|^2 \left(\frac{\pi n\lVert\delta\rVert R\sqrt{v_2}}{2K}
 \cos(2\pi t_2)\right)+O\left(\frac{R^{\varepsilon}}{K}
\right).
\end{split}
\end{equation}

Applying \eqref{eq:D}, \eqref{eq:C}, and \eqref{eq:taylorapplied}, in \eqref{eq:B}, and recalling $\lVert \alpha \lVert=m \lVert \delta \rVert \le \lVert \beta_b\rVert= |n|\lVert \delta \rVert$ and that we weight terms with $m \neq |n|$ by a factor of $2$, we deduce that
\begin{equation} \label{eq:E}
\begin{split}
\tmop{Var}(R,K)
={}&
\frac{\pi^2R^4}{K^2}
\sum_{\substack{m\in\mathbb N,\ n\in\mathbb Z\\
1\le m\le |n|\le \frac{K^{1+\varepsilon}}{R}}}
\left(2-1_{m=|n|}\right)
\sumstar_{\substack{\delta\in\mathbb Z^2\\
1\le \lVert\delta\rVert\le \frac{K^{1+\varepsilon}}{|n|R}}}
\int_{\mathbb R^2}W(v_1)W(v_2)
\\
&\quad\times
\int\limits_{\substack{t \in[-1/2,1/2] \\ \lVert t \rVert_{\frac12 \mathbb Z} \le \frac{1}{R^{1-\varepsilon}m \lVert \delta \rVert}}}
\bigg(e\left(R\lVert\delta\rVert
\left(n\sqrt{v_2}-m\sqrt{v_1}\right)
\sin(2\pi t)\right)
\\
&\quad\times
|\widehat f|^2\left(
\frac{\pi nR\lVert\delta\rVert\sqrt{v_2}}{2K}
\cos(2\pi t)\right)
+O\left(\frac{R^{\varepsilon}}{K}\right)\bigg)
\,dt\,dv_1\,dv_2
+O(R^{-A})
\end{split}
\end{equation}
for any $A>0$.

We now bound the error term appearing in the integrand by
\begin{equation} \label{eq:F}
\ll
\frac{R^4}{K^2}\cdot\frac{R^\varepsilon}{K}
\sum_{\substack{m\in\mathbb N,\ n\in\mathbb Z\\
1\le m\le |n|\le \frac{K^{1+\varepsilon}}{R}}} \,\,
\sumstar_{1 \le \lVert\delta\rVert\le \frac{K^{1+\varepsilon}}{|n|R}}
\frac{1}{mR\lVert\delta\rVert}
\ll
\frac{R^{2+\varepsilon}}{K^2}.
\end{equation}
Combining \eqref{eq:E} and \eqref{eq:F} and recalling the definition of
$\Sigma_{K,R}(v_1,v_2)$ given in \eqref{eq:sigmadef} completes the proof.
\end{proof}

\subsection{Simplifying the lattice point sum}
Let \[ V(u)= 2u W(u^2) 1_{(0,\infty)}(u).\] We next establish a cleaner expression for the main term appearing in Lemma \ref{lem:poissonagain}.

\begin{lemma} \label{lem:clean}
Let $\eta>0$. For $R^{1+\eta}<K< R^{2-\eta}$,
we have for any $\varepsilon>0$ that
\begin{equation} \label{eq:clean}
\begin{split}
&\int_{\mathbb R^2} W(v_1) W(v_2) \Sigma_{K,R}(v_1,v_2) \, dv_1 dv_2 \\
&= \frac{1}{\pi R} \sum_{m,n \in \mathbb N} \, \sumstar_{\delta \in \mathbb Z^2 \setminus\{0\}} \frac{1}{n\lVert \delta \rVert}  \int_{\mathbb R} V(t) V(\tfrac{m}{n} t) \, |\widehat f|^2\bigg(\frac{\pi m R \lVert \delta \rVert}{2K}t \bigg)\, dt+O\bigg( \frac{K^{2+\varepsilon}}{R^5} \bigg).
\end{split}
\end{equation}
\end{lemma}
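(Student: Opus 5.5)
The plan is to carry out the $v_1,v_2$ integrals after the substitution $v_j=u_j^2$, so that $W(v_j)\,dv_j=V(u_j)\,du_j$. Then use the localization of $t$ near the half-integers to linearize the phase. After this, the $t$-integral acts as an approximate delta function forcing $n u_2=m u_1$.

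\textbf{Step 1: linearizing near the half-integers.} Split the $t$-range in \eqref{eq:sigmadef} into the neighbourhood of $t=0$ and the neighbourhood of $t=\pm\tfrac12$; the latter two pieces glue into one neighbourhood of $\tfrac12$ modulo $1$. On the piece near $\tfrac12$, substitute $t\mapsto t+\tfrac12$. This changes the signs of $\sin(2\pi t)$ and $\cos(2\pi t)$. Since $f$ is even, $|\widehat f|^2$ is even, so this piece equals the $t\approx0$ piece with the phase conjugated. Near $t=0$, replace $\sin(2\pi t)$ by $2\pi t$ and $\cos(2\pi t)$ by $1$. The resulting errors are governed by the phase error
\[
R\lVert\delta\rVert\,|n|\,|t|^3\ll \frac{|n|R^{3\varepsilon}}{R^2m^3\lVert\delta\rVert^2}
\]
and by a relative error $O(t^2)$ in the argument of $|\widehat f|^2$.

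\textbf{Step 2: the model integral and the sign of $n$.} Integrate in $u_1$ first. The integral $\int V(u_1)e(-2\pi R\lVert\delta\rVert m u_1 t)\,du_1=\widehat V(2\pi R\lVert\delta\rVert m t)$ decays rapidly once $|t|\gg R^{\varepsilon}/(Rm\lVert\delta\rVert)$. This lets us extend the truncated $t$-integral to all of $\mathbb R$ with negligible loss. The model integral over $t$ then becomes, by Fourier inversion,
\[
\int_{\mathbb R^3}V(u_1)V(u_2)\,e\big(2\pi R\lVert\delta\rVert(nu_2-mu_1)t\big)\,|\widehat f|^2\Big(\tfrac{\pi nR\lVert\delta\rVert u_2}{2K}\Big)\,dt\,du_1\,du_2 .
\]
For $n>0$ this equals
\[
\frac{1}{2\pi R\lVert\delta\rVert\, n}\int V(u_1)V(\tfrac mn u_1)\,|\widehat f|^2\Big(\tfrac{\pi mR\lVert\delta\rVert u_1}{2K}\Big)\,du_1 .
\]
For $n<0$ it vanishes, because $V$ is supported on $(0,\infty)$ and so $nu_2-mu_1$ never vanishes; equivalently, $\widehat V(a)\overline{\widehat V(b)}$-type integrals with opposite-sign arguments are negligible after the rapid decay is used. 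Adding the two half-integer neighbourhoods gives a factor $2$, producing $\frac{1}{\pi R n\lVert\delta\rVert}$.

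\textbf{Step 3: symmetrizing in $m,n$.} The weight $(2-1_{m=|n|})$ over $m\le n$ should be converted into an unrestricted sum over $m,n\in\mathbb N$. The substitution $u\mapsto \tfrac nm u$ sends
\[
\tfrac1n\int V(u)V(\tfrac mn u)\,|\widehat f|^2\Big(\tfrac{\pi mR\lVert\delta\rVert}{2K}u\Big)du
\quad\text{to}\quad
\tfrac1m\int V(\tfrac nm u)V(u)\,|\widehat f|^2\Big(\tfrac{\pi nR\lVert\delta\rVert}{2K}u\Big)du ,
\]
which is exactly the $(n,m)$ term. Hence the terms with $m<n$ counted twice reproduce the full sum. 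Finally, remove the truncations $|n|\le K^{1+\varepsilon}/R$ and $\lVert\delta\rVert\le K^{1+\varepsilon}/(|n|R)$. This costs only $O(R^{-A})$, by the rapid decay of $\widehat f$ and because $V$ has compact support in $(0,\infty)$.

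\textbf{Main obstacle.} I expect the hardest part to be the error accounting in Step 1, namely showing that the Taylor errors summed over $m,n,\delta$ total $O(K^{2+\varepsilon}/R^5)$. My first attempt would bound each term by the $t$-measure $R^{\varepsilon}/(Rm\lVert\delta\rVert)$ times the sum of the phase error, $\ll nR^{\varepsilon}/(R^2m^3\lVert\delta\rVert^2)$, and the $|\widehat f|^2$ error, $\ll nR\lVert\delta\rVert t^2/K\ll nR^{\varepsilon}/(KRm^2\lVert\delta\rVert)$. I would then sum over $\lVert\delta\rVert\le K^{1+\varepsilon}/(nR)$ and $m\le n\le K^{1+\varepsilon}/R$. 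The second error sums to roughly $K^{2+\varepsilon}/(KR^2)\cdot R^{-1}$ per unit scale, which should match the claimed $K^{2+\varepsilon}/R^5$ up to the normalization of the factor $1/(\pi R)$. If it does not, the fallback is to keep the exact $\cos(2\pi t)$, integrate by parts in $t$, and gain from the oscillation rather than from the size of $t$ alone.
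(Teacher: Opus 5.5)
Your proposal is correct and follows the paper's proof essentially step for step: linearize $\sin(2\pi t)$ and $\cos(2\pi t)$ near $t=0$ and $t=\tfrac12$, identify the two pieces using the evenness of $|\widehat f|^2$, and substitute $u_j=\sqrt{v_j}$. You then extend the $t$-range using the decay of $\widehat V$, use Plancherel to force $nu_2=mu_1$ (which kills $n<0$), symmetrize in $m,n$, and drop the truncations.

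The one loose end is your error tally, and it closes without any fallback. Your per-term bounds are exactly the paper's. Summed over $m\le n\le K^{1+\varepsilon}/R$ and $\lVert\delta\rVert\le K^{1+\varepsilon}/(nR)$, they give $\sum nR^{\varepsilon}/(R^3m^4\lVert\delta\rVert^3)\ll K^{2+\varepsilon}/R^5$ and $\sum nR^{\varepsilon}/(KR^2m^3\lVert\delta\rVert^2)\ll K^{1+\varepsilon}/R^4\le K^{2+\varepsilon}/R^5$, the last step using $K>R$. So your figure $K^{1+\varepsilon}/R^3$ is an arithmetic slip, and the normalization $1/(\pi R)$ plays no role in the error term.
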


\begin{proof}
Write $Q=\frac{\pi n R \lVert \delta \rVert}{2K}$ so $|Q| \le R^{\varepsilon}$.
Using Taylor expansions, we see that
\begin{equation} \label{eq:taylorp1}
\begin{split}
&\int_{|t| \le \frac{1}{R^{1-\varepsilon}m \lVert \delta \rVert}} e(R\lVert \delta \rVert (n \sqrt{v_2}-m\sqrt{v_1}) \sin(2\pi t))
|\widehat f|^2(Q \sqrt{v_2} \cos(2\pi t)) dt\\
& \qquad =\int_{|t|\le  \frac{1}{R^{1-\varepsilon}m \lVert \delta \rVert}}
e(2\pi R \lVert \delta \rVert (n \sqrt{v_2}-m \sqrt{v_1}) t) dt
|\widehat f|^2 (Q \sqrt{v_2})\\
&\qquad \qquad  +O\bigg( \frac{K^{1+\varepsilon}}{R^4m^4\lVert \delta \rVert^4}+\frac{|n| R^{\varepsilon}}{KR^2\lVert \delta \rVert^2 m^3}\bigg).
\end{split}
\end{equation}
Similarly, we have that
\begin{equation} \label{eq:taylorp2}
\begin{split}
&\int_{|t-\frac12| \le \frac{1}{R^{1-\varepsilon}m \lVert \delta \rVert}} e(R\lVert \delta \rVert (n \sqrt{v_2}-m\sqrt{v_1}) \sin(2\pi t))
|\widehat f|^2(Q \sqrt{v_2} \cos(2\pi t)) dt\\
& \qquad =\int_{|s|\le \frac{1}{R^{1-\varepsilon}m \lVert \delta \rVert}}
e(-2\pi R \lVert \delta \rVert (n \sqrt{v_2}-m \sqrt{v_1}) s) ds
|\widehat f|^2 (-Q \sqrt{v_2})\\
&\qquad \qquad  +O\bigg( \frac{K^{1+\varepsilon}}{R^4m^4\lVert \delta \rVert^4}+\frac{|n| R^{\varepsilon}}{KR^2\lVert \delta \rVert^2  m^3}\bigg).
\end{split}
\end{equation}

Since $|\widehat f|^2$ is even, making the change of variables $t=-s$ in \eqref{eq:taylorp2} shows that the main terms on the right-hand sides of \eqref{eq:taylorp1} and \eqref{eq:taylorp2} are equal.
Let $h_{Q}(t)=|\widehat f|^2(Qt)$ and
\[
\mathcal I (\delta,m,n)=\int_{\mathbb R^2} W(v_1)W(v_2) h_{  Q}( \sqrt{v_2})\int_{|t| \le \frac{1}{R^{1-\varepsilon}m\lVert \delta \rVert}} e(  2\pi R \lVert \delta \rVert (n \sqrt{v_2}-m \sqrt{v_1}) t) \, dt dv_1 dv_2.
\]
Applying \eqref{eq:taylorp1} and \eqref{eq:taylorp2} we get that
\begin{equation} \label{eq:taylorp3}
\begin{split}
&\int_{\mathbb R^2}W(v_1)W(v_2)
\Sigma_{K,R}(v_1,v_2)\,dv_1\,dv_2\\
&=2 \sum_{\substack{m\in\mathbb N,\ n\in\mathbb Z\\
1\le m\le |n|\le \frac{K^{1+\varepsilon}}{R}}}
\left(2-1_{m=|n|}\right)
\sumstar_{\substack{\delta\in\mathbb Z^2\\
1\le \lVert\delta\rVert\le \frac{K^{1+\varepsilon}}{|n|R}}}  \mathcal I(\delta,m,n)+O\bigg( \frac{K^{2+\varepsilon}}{R^5}\bigg).
\end{split}
\end{equation}
Here we estimated the error term as
\[
\ll R^{\varepsilon} \bigg( \sum_{\substack{m\in\mathbb N,\ n\in\mathbb Z\\
1\le m\le |n|\le \frac{K^{1+\varepsilon}}{R}}}
\sumstar_{\substack{\delta\in\mathbb Z^2\\
1\le \lVert\delta\rVert\le \frac{K^{1+\varepsilon}}{|n|R}}} \bigg( \frac{K}{R^4m^4\lVert \delta \rVert^4}+\frac{|n| }{KR^2\lVert \delta \rVert^2 m^3} \bigg)\bigg)\ll \frac{K^{2+\varepsilon}
}{R^5}
\]

We turn to evaluating $\mathcal I(\delta,m,n)$, and make the substitutions $u_1=\sqrt{v_1}$, $u_2=\sqrt{v_2}$, and $x=2\pi R \lVert \delta \rVert t$ to get that
\[
\mathcal I(\delta,m,n)
= \int_{\mathbb R^2} V(u_1)V(u_2) h_{ Q}(u_2)
\int_{|x| \le \frac{2\pi R^{\varepsilon}}{m}} e(  x (nu_2-mu_1)) \frac{dx}{2\pi R \lVert \delta \rVert} du_1 du_2.
\]
By repeatedly integrating by parts, with respect to $u_1$ we can extend the integral over $x$ to $|x| < R^{\varepsilon}$ at the cost of an error term of size $O(R^{-A})$ for any $A>0$. Hence, writing $t$ in place of $x$ for the variable of integration, we get that
\begin{equation} \label{eq:fourier}
\begin{split}
\mathcal I(\delta,m,n)=&\int_{-R^{\varepsilon}}^{R^{\varepsilon}} \int_{\mathbb R} V(u_1) e(  - m tu_1) du_1 \int_{\mathbb R} V(u_2)h_{  Q}( u_2)e(   tnu_2) du_2 \frac{dt}{2 \pi R \lVert \delta \rVert}+O(R^{-A})\\
&=\int_{-R^{\varepsilon}}^{R^{\varepsilon}} \widehat V(  mt) (\widehat V \ast \widehat h_{  Q})(  -tn) \, \frac{dt}{2\pi R \lVert \delta \rVert}+O(R^{-A}) \\
&=\int_{\mathbb R} \widehat V(  mt) \widehat{(V \cdot h_{  Q})}(  -tn) \frac{dt}{2\pi R \lVert \delta \rVert}+O(R^{-A}).
\end{split}
\end{equation}
Using \eqref{eq:fourier} and applying the Plancherel theorem we obtain
\begin{equation} \label{eq:plancherelapplied}
\begin{split}
\mathcal I(\delta,m,n)= &\int_{\mathbb R}  V(t) V(\tfrac{m}{n}t) h_{  Q}\left(\frac{m}{n} t \right) \frac{dt}{2\pi R \lVert \delta \rVert n}+O(R^{-A}) \\
=&
 \int_{\mathbb R} V(t) V(\tfrac{m}{n}t) |\widehat f|^2\bigg( \frac{   \pi mR \lVert \delta \rVert}{2K}t\bigg) \, \frac{dt}{2\pi R \lVert \delta \rVert n}+O(R^{-A}).
\end{split}
\end{equation}
To complete the proof first recall that $V$ is supported on the positive reals so that $V(\frac{m}{n}t)V(t)$ restricts $n$ to be positive in the main term in \eqref{eq:plancherelapplied}. Now apply \eqref{eq:plancherelapplied} in \eqref{eq:taylorp3}, and also use the rapid decay of $|\widehat f|^2$ to extend the sums to all $n$ and $\delta$ up to a negligible error term of size $O(R^{-A})$, noting that $m \asymp n$ in the main term in \eqref{eq:plancherelapplied} since $V$ is compactly supported on the positive reals. Finally, by a change of variable in the integral and by symmetry, it is easy to see that we can sum over $m,n\in \mathbb{N}$ and remove the weight $2-1_{m=|n|}$.
 \end{proof}

\section{Evaluation  of the main term} \label{sec:mainterm}

Before proceeding to the evaluation of the main term appearing in \eqref{eq:clean}, we require the following simple lemma. Recall that
\[
 V(u)= 2u W(u^2) 1_{(0,\infty)}(u).
\]

\begin{lemma} \label{lem:euler} Let $m \in \mathbb N$ and suppose $t$ is in the support of $V$. Then for any $A>0$ we have that
\begin{equation} \label{eq:coeff}
\sum_{n \ge 1} \frac{1}{n} V\left(\frac{m }{n} t \right)
= \int_0^\infty V(u) \frac{du}{u}+O(m^{-A}),
\end{equation}
where the implied constant depends on at most $A$, and $V$.
\end{lemma}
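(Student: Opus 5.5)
The sum is a Riemann sum for the integral $\int_0^\infty V(u)\,du/u$ with mesh of order $1/m$, so the plan is to compare the two by Poisson summation. Set $x = mt$; since $t$ lies in the support of $V$, which is a compact subset of $(0,\infty)$, we have $x \asymp m$. Define
\[
G(y) = \frac{1}{y} V\!\left(\frac{x}{y}\right) 1_{(0,\infty)}(y).
\]
Then the left-hand side of \eqref{eq:coeff} equals $\sum_{n\in\mathbb Z} G(n)$. The function $G$ is smooth, because $V$ vanishes near $0$ and $G$ vanishes near $y=0$. It is supported in the range $y \asymp x$, that is $y\in[x/b, x/a]$ when $\operatorname{supp} V\subset[a,b]$.

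Apply Poisson summation:
\[
\sum_{n} G(n)=\sum_{j\in\mathbb Z}\widehat G(j).
\]
The $j=0$ term is
\[
\int_0^\infty V(x/y)\,\frac{dy}{y} = \int_0^\infty V(u)\,\frac{du}{u},
\]
using the substitution $u=x/y$. This is exactly the claimed main term, and it is independent of $m$ and $t$.

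For $j\neq 0$, integrate by parts $A$ times. Write $G(y)=x^{-1}H(y/x)$ with
\[
H(z) = z^{-1}V(1/z),
\]
a fixed smooth function compactly supported in $(0,\infty)$. Then
\[
\widehat G(j)=\widehat H(jx),
\]
and $\widehat H(jx) \ll_A (|j|x)^{-A-2}$. Summing over $j\neq0$ gives a total of $O(x^{-A})$. Since $x = mt \gg m$, uniformly over $t$ in the support of $V$, this is $O(m^{-A})$ with constants depending only on $A$ and $V$.

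The only step that needs care is uniformity: the implied constant must not depend on $t$. This follows because $t$ ranges over a fixed compact subset of $(0,\infty)$, so $x\asymp m$ with absolute constants, and the function $H$ does not depend on $m$ or $t$. For small $m$ the bound is trivial, since both sides are $O(1)$. No genuine obstacle is expected.
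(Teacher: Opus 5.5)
Your proposal is correct and is essentially the paper's proof: the paper sets $g(u)=u^{-1}V(1/u)$ (your $H$), writes the sum as $\frac{1}{mt}\sum_{n\in\mathbb Z} g\bigl(\frac{n}{mt}\bigr)$, and applies Poisson summation. The nonzero frequencies $jmt$ are then killed by the rapid decay of $\widehat g$, and the zero frequency gives $\int_0^\infty V(u)\,du/u$ after the substitution $u\mapsto 1/u$. Your explicit remark on uniformity in $t$ (via $mt\gg m$ on the compact support of $V$) is a detail the paper leaves implicit.
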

\begin{proof}
Define the function $g$ by
\[
g(u) = \frac{1}{u}V\left(\frac{1}{u}\right).
\]
Since $g$ is smooth and supported on the positive real line, the Poisson summation formula and the rapid decay of $\widehat{g}$ give
\begin{equation} \label{eq:euler1}
\sum_{n \ge 1} \frac{1}{n} V\left(\frac{m }{n} t \right) = \frac{1}{mt} \sum_{n \in \mathbb{Z}} g\left(\frac{n}{mt}\right)=\int_0^{\infty} g(u) \, du
+O( m^{-A})
\end{equation}
for any $A>0$.
We complete the proof by making the change of variable $1/u \rightarrow u $ to see that
\begin{equation} \label{eq:euler3}
\int_0^{\infty} g(u) du=\int_0^\infty V(u) \frac{du}{u}.
\end{equation} \end{proof}

In the next lemma we evaluate the main term in Lemma \ref{lem:clean} using a contour integration argument.

\begin{lemma}  \label{lem:mainterm}
Let $\eta>0$.  For $R^{1+\eta}<K < R^{2-\eta}$, we have for any $\varepsilon>0$ that
\begin{equation} \label{eq:mainterm1}
\sum_{m,n \in \mathbb N } \, \sumstar_{\delta \in \mathbb Z^2 \setminus\{0\}} \frac{1 }{n\lVert \delta \rVert}  \int_{\mathbb R} V(t) V(\tfrac{m}{n} t) \, |\widehat f|^2\bigg(\frac{\pi m R \lVert \delta \rVert}{2K}t \bigg)\, dt=
\frac{C_1}{\pi} \frac{K}{R} \log\frac{K}{R}+\frac{C_0}{\pi} \frac{K}{R} +O \left( \left( \frac{K}{R}\right)^{\varepsilon} \right),
\end{equation}
where $C_{1},C_{0}$ are the constants defined in (\ref{eq:main_consts}).
\end{lemma}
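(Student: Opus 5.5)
First I will use Lemma \ref{lem:euler} to remove the $n$-sum. Interchange the sums so that $n$ is innermost. For fixed $m,\delta$ the $n$-sum is $\sum_{n\ge1}\frac1n V(\frac mn t)$ with $t\in\operatorname{supp}V$. By Lemma \ref{lem:euler} this equals $c_V:=\int_0^\infty V(u)\frac{du}{u}+O(m^{-A})$. Set $X=\frac{2K}{\pi R}$. The left-hand side of \eqref{eq:mainterm1} then becomes
\[
c_V\sum_{m\ge1}\,\sumstar_{\delta\neq0}\frac{1}{\lVert\delta\rVert}\int_{\mathbb R}V(t)^2\,|\widehat f|^2\Big(\frac{m\lVert\delta\rVert t}{X}\Big)\,dt+E.
\]
The error is
\[
E\ll\sum_{m}m^{-A}\sumstar_{\delta}\lVert\delta\rVert^{-1}\,\big||\widehat f|^2\big|(m\lVert\delta\rVert t/X),
\]
and since $|\widehat f|^2$ decays rapidly the $\delta$-sum has effective length $\lVert\delta\rVert\ll X^{1+\varepsilon}/m$. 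So $E\ll X$. That bound is too large, so the error has to be handled more carefully. I would keep the remainder exactly, writing $\sum_n\frac1nV(\frac mn t)-c_V=r_m(t)$, where $r_m$ is a sum of Fourier modes $\widehat g(jmt)$ with $j\neq0$. Its contribution is $\sum_m r_m\sum_\delta\frac{1}{\lVert\delta\rVert}h(m\lVert\delta\rVert/X)$. In Mellin form the $\delta$-sum equals $X^s\zeta$-type factor $\times\, E^*(\frac{1+s}{2})$ with a pole at $s=1$, giving size $X/m$. Summing $X/m\cdot m^{-A}$ still leaves a term of size $X$, and this would be a genuine constant times $X$ feeding into $C_0$. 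For that reason I will \emph{not} discard the remainder. Instead I will do the whole computation with Mellin transforms from the start.

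The main route is a triple Mellin inversion. Let $H(w)=\int_0^\infty|\widehat f|^2(x)x^{w-1}\,dx$; it is holomorphic for $\Re w>0$ with rapid decay in vertical strips, and meromorphic beyond. Then
\[
|\widehat f|^2\Big(\frac{m\lVert\delta\rVert t}{X}\Big)=\frac1{2\pi i}\int_{(2)}H(w)\,X^{w}(m\lVert\delta\rVert t)^{-w}\,dw .
\]
Insert this into the left side of \eqref{eq:mainterm1} and use
\[
\sumstar_{\delta}\lVert\delta\rVert^{-1-w}=\frac{Z(\frac{1+w}{2})}{\zeta(1+w)},\qquad Z(s)=\sum_{0\neq\delta\in\mathbb Z^2}\lVert\delta\rVert^{-2s}
\]
(the Epstein zeta function, with a simple pole at $s=1$, i.e.\ at $w=1$, of residue $\pi$). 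The left side becomes
\[
\frac1{2\pi i}\int_{(2)}H(w)X^w\frac{Z(\frac{1+w}2)}{\zeta(1+w)}\,D(w)\,dw,
\qquad
D(w)=\sum_{m,n}\frac{m^{-w}}{n}\int V(t)V(\tfrac mn t)t^{-w}\,dt .
\]
To understand $D$, substitute $r=m/n$ and apply Mellin inversion once more, writing $V(rt)=\frac1{2\pi i}\int\widetilde V(z)(rt)^{-z}\,dz$. This gives
\[
D(w)=\frac1{2\pi i}\int\widetilde V(z)\,\widetilde V(1-w-z)\,\zeta(w+z)\,\zeta(1-z)\,dz .
\]
Here $\widetilde V$ is entire because $V$ is compactly supported in $(0,\infty)$. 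Shifting $z$ past the pole of $\zeta(1-z)$ at $z=0$ shows that $D(w)$ has a simple pole at $w=1$ with residue $\widetilde V(0)\widetilde V(0)$-type data, and a holomorphic remainder. This is precisely the content of Lemma \ref{lem:euler}: the $n$-sum gives $c_V=\widetilde V(0)$ times $\zeta(w)$-like behaviour in $m$.

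The integrand therefore has a double pole at $w=1$, coming from $Z$ and from $D$ (equivalently $\zeta(w)$). I will shift the contour to $\Re w=\varepsilon$. The residue at $w=1$ is $X(a\log X+b)$. Tracking the constants, including $1/\zeta(2)$, $\pi$, $H(1)$, $H'(1)$, and the Laurent coefficients of $Z$ and $D$, should reproduce $\frac{C_1}{\pi}\frac KR\log\frac KR+\frac{C_0}{\pi}\frac KR$ once $\log X$ is rewritten as $\log\frac KR+\log\frac2\pi$. On $\Re w=\varepsilon$ we have $X^{\varepsilon}$, and $1/\zeta(1+w)$ is bounded there. The factors $Z$ and $D$ grow polynomially, which the rapid decay of $H$ absorbs. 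The shifted integral is therefore $O(X^\varepsilon)$.

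The main obstacle is justifying the analytic continuation and polynomial growth of $D(w)$ on $0<\Re w\le 2$, and getting its Laurent expansion at $w=1$ correctly. This requires the double Mellin representation above, together with checking that the $z$-contour can be placed where both zeta factors converge; for instance $\Re z=-1/2$, which requires $\Re w>3/2$ initially, followed by continuation via the shift past $z=0$. The rest is bookkeeping of constants, which defines $C_0,C_1$.
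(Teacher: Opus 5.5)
Your argument is essentially the paper's: Mellin inversion in $|\widehat f|^2$, the primitive $\delta$-sum written as $Z(\frac{1+w}{2})/\zeta(1+w)$, a simple pole of the $m$-series at $w=1$ coming from $\zeta(1-z)$ at $z=0$ in the double Mellin integral $\frac{1}{2\pi i}\int \widetilde V(z)\widetilde V(1-w-z)\zeta(w+z)\zeta(1-z)\,dz$, and a double pole at $w=1$ before shifting to $\Re w=\varepsilon$. The only difference is that you integrate over $t$ first and use that integral for the continuation, whereas the paper fixes $t$, gets the continuation of $D_{V,t}$ from Lemma \ref{lem:euler}, and uses the same double integral only to show $\int_{\mathbb R}\mathcal R_t(1)V(t)t^{-1}\,dt=2c_4$; the constant bookkeeping you defer does work out. (In the $w$-variable the residue of $Z(\frac{1+w}{2})$ is $2\pi$; the shifted $z$-integral at $w=1$ equals $2c_4$; and the term $-\widetilde V(0)\widetilde V'(0)$ from expanding $\widetilde V(1-w)$ gives the net $-c_1c_3$ in $C_0=c_1c_6+c_5$.)
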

\begin{proof}
Recall the following well-known formula for the Epstein zeta-function. For $s=\sigma+i\tau$ with $\sigma>1$ we have that
\begin{equation} \label{eq:epstein}
E(s):=\sumstar_{\delta \in \mathbb Z^2 \setminus\{0\}} \frac{1}{\lVert \delta \rVert^{2s}}=\frac{4 \zeta(s)L(s,\chi_{-4})}{\zeta(2s)},
\end{equation}
where $\zeta(s)$ denotes the Riemann zeta-function,
and $L(s,\chi_{-4})$ is the Dirichlet $L$-function attached to the non-principal character $\pmod 4$, which we write as $\chi_{-4}$. That is,
 $\chi_{-4}:  \mathbb Z \rightarrow \{0,\pm 1\}$ with $\chi_{-4}(n)=(-1)^{(n-1)/2}$ if $n$ is odd and $\chi_{-4}(n)=0$ otherwise, and for $\sigma>1$
\[
L(s,\chi_{-4})=\sum_{m =1}^{\infty} \frac{\chi_{-4}(m)}{m^s}.
\]
Note that $L(s,\chi_{-4})$ admits an analytic continuation to $\mathbb C$.
It follows that $E(s)$ is holomorphic for $\sigma \ge \frac12$, except for a simple pole at $s=1$. Using the following expansions near $s=1$
\begin{equation} \label{eq:zeta}
\zeta(s)=\frac{1}{s-1}+\gamma+O(|s-1|),
\end{equation}
where $\gamma=0.57721\ldots$ is Euler's constant,
\[
L(s,\chi_{-4})=\frac{\pi}{4}+L'(1,\chi_{-4})(s-1)+O(|s-1|^2),
\]
where we used $L(1,\chi_{-4})=\frac{\pi}{4}$, and
\[
\frac{1}{\zeta(2s)}=\frac{6}{\pi^2}\bigg(1-2\frac{\zeta'}{\zeta}(2)(s-1)+O(|s-1|^2) \bigg)
\]
it is not hard to show that near $s=1$ one has
\begin{equation} \label{eq:Enear1}
E(s)=\frac{6}{\pi(s-1)}+\frac{6}{\pi} \bigg(\gamma+\frac{L'}{L}(1,\chi_{-4})-2\frac{\zeta'}{\zeta}(2)\bigg)+O(|s-1|).
\end{equation}

Also, note that by Lemma \ref{lem:euler} for $m \in \mathbb N$ and $t$ in the support of $V$ we have that
\[
a_{V,t}(m):=\sum_{n \ge 1} \frac{1}{n} V\bigg(\frac{m}{n}t \bigg) = \int_0^\infty V(u) \frac{du}{u}+O(m^{-A})
\]
for any $A>0$.
Hence, it follows that
\begin{equation} \label{eq:Ddef}
D_{V,t}(s):=\sum_{m=1}^{\infty} \frac{a_{V,t}(m)}{m^s}= \int_0^\infty V(u) \frac{du}{u} \zeta(s)+\mathcal R_t(s)
\end{equation}
where $\mathcal R_t(s)$ is an entire function, and satisfies the bound $|\mathcal R_t(s)|=O(1)$ in any fixed strip $A\le \sigma \le B$. In particular, $D_{V,t}(s)$ is holomorphic in $\mathbb{C}$ except for a simple pole at $s=1$ and, using \eqref{eq:zeta}, near $s=1$ we have that
\begin{equation} \label{eq:Dnear1}
D_{V,t}(s)=\frac{\int_0^\infty V(u) \frac{du}{u}}{s-1}+\gamma \int_0^\infty V(u) \frac{du}{u}+\mathcal R_t(1)+O(|s-1|).
\end{equation}

Write $g=|\widehat f|^2$ and $\widetilde g(s) = \int_{0}^{\infty}x^{s-1}g\left(x\right)dx$ for the Mellin transform of $g$ which is holomorphic  for $\sigma>0$. Applying the Mellin inversion theorem we see that
\begin{equation} \label{eq:mellin}
\sum_{m,n \in \mathbb N} \frac{V(\frac{m}{n} t)
}{n}\sumstar_{\delta \in \mathbb Z^2 \setminus\{0\}} \frac{1}{\lVert \delta \rVert} g\bigg(\frac{\pi mR \lVert \delta \rVert}{2K} t \bigg) =\frac{1}{2\pi i} \int_{(2)} \bigg(\frac{2K}{\pi R t} \bigg)^s E\left( \frac{s+1}{2}\right) D_{V,t}(s) \widetilde g(s) \, ds.
\end{equation}
Using the rapid decay of $\widetilde g(s)$ in $\tau$ in any fixed strip $0<A\le \sigma \le B$ and recalling the formulas \eqref{eq:epstein} and \eqref{eq:Ddef}, we may shift the contour on the right-hand side of \eqref{eq:mellin} to $\sigma=\varepsilon$. Picking up a double pole at $s=1$ we obtain
\begin{align} \label{eq:contourshift}
\frac{1}{2\pi i} \int_{(2)} \bigg(\frac{2K}{\pi R t} \bigg)^s E\left( \frac{s+1}{2}\right) D_{V,t}(s) \widetilde g(s) \, ds=&
\tmop{Res}_{s=1} \bigg(\bigg(  \frac{2K}{\pi R t}\bigg)^s E\left(\frac{s+1}{2} \right) D_{V,t}(s) \widetilde g(s) \bigg) \nonumber \\
&+\frac{1}{2\pi i} \int_{(\varepsilon)} \bigg(\frac{2K}{\pi R t} \bigg)^s E\left( \frac{s+1}{2}\right) D_{V,t}(s) \widetilde g(s) \, ds.
\end{align}
Using the rapid decay of $\widetilde g(s)$ in $\tau$ on the line $\sigma=\varepsilon$ we conclude for $t$ in the support of $V$ that
\begin{equation} \label{eq:contourbd}
\frac{1}{2\pi i} \int_{(\varepsilon)} \bigg(\frac{2K}{\pi R t} \bigg)^s E\left( \frac{s+1}{2}\right) D_{V,t}(s) \widetilde g(s) \, ds \ll \left( \frac{K}{R} \right)^{\varepsilon}.
\end{equation}

To evaluate the residue on the right-hand side of \eqref{eq:contourshift} write $X=\frac{2K}{\pi R t}$ and note that near $s=1$ we have
\begin{equation} \label{eq:Xgnear1}
X^s \widetilde g(s)=X \widetilde g(1)+X(\widetilde g'(1)+\widetilde g(1) \log X)(s-1)+O(|s-1|^2),
\end{equation}
where we have allowed the implied constant to depend on $X$ since we take $s \rightarrow 1$.
Hence, using \eqref{eq:Enear1}, \eqref{eq:Dnear1}, and \eqref{eq:Xgnear1} we get that
\begin{align} \label{eq:res-eval}
&\tmop{Res}_{s=1} \bigg(X^s E\left(\frac{s+1}{2} \right) D_{V,t}(s) \widetilde g(s)\bigg) \nonumber \\
&= \frac{12 X}{\pi} \int_0^\infty V(u) \frac{du}{u}
\bigg(\widetilde g(1)\bigg(\log X+\frac{3\gamma}{2}+\frac12\frac{L'}{L}(1,\chi_{-4})-\frac{\zeta'}{\zeta}(2) \bigg)+\widetilde g'(1) \bigg)
+\frac{12X}{\pi} \mathcal R_t(1)\widetilde g(1).
\end{align}
Using \eqref{eq:contourbd} and \eqref{eq:res-eval} in \eqref{eq:contourshift}, and then applying the resulting formula in \eqref{eq:mellin} and integrating against $V(t)$ over $t$ in $\mathbb R$ completes the proof of \eqref{eq:mainterm1} up to evaluating the constants.

To evaluate the constants, we first note that since $|\widehat f|^2$ is even that
\begin{equation} \label{eq:int1}
\widetilde g(1)=\int_0^{\infty}|\widehat f|^2(x) \, dx=\frac{1}{2} \int_{\mathbb R}  |\widehat f|^2(x) \, dx = \frac{1}{2} \int_{\mathbb R} |f(t)|^2 \, dt.
\end{equation}
where the last step follows from the Plancherel theorem.
Hence, recalling $X=\frac{2K}{\pi R t}$, $V(u)=2uW(u^2)$, and integrating the right-hand side against $V(t)$ over $t \in \mathbb R$, it follows from  \eqref{eq:int1}  that the leading constant is equal to
\[
\frac{12}{\pi } \cdot \frac{2}{\pi }\cdot\frac{1}{2} \int_{\mathbb R} |f(t)|^2 \, dt\cdot  \bigg(\int_0^{\infty} V(u) \frac{du}{u} \bigg)^2=\frac{12}{\pi^2} \, \int_{\mathbb R} |f(t)|^2 \, dt \, \bigg(\int_0^{\infty} V(u) \frac{du}{u} \bigg)^2 = \frac{C_1}{\pi}.
\]

As for the constant of the lower order term $\frac{K}{R},$ by \eqref{eq:mellin}, \eqref{eq:contourshift} and \eqref{eq:res-eval}, it is enough to show that
\[
\int_{\mathbb{R}}\mathcal{R}_{t}\left(1\right)\frac{V\left(t\right)}{t}dt =2c_{4}.
\]
To prove this, fix $s$ with $\sigma>1$ and note that the Mellin inversion theorem can be applied to $V$ on the $-\varepsilon$ line to get that
\[
\sum_{n \ge 1} \frac{V(\frac{m}{n}t)}{n}=\frac{1}{2\pi i} \int_{(-\varepsilon)}
(mt)^{-w} \zeta(1-w)\widetilde V(w) \, dw.
\]
By shifting the contour of the
integral to the right (and thus picking the residue at $w=0$ with a minus sign), we have
\begin{equation} \label{eq:Dform}
\begin{split}
D_{V,t}(s)& =\frac{1}{2\pi i}\int_{\left(-\varepsilon\right)}\zeta\left(1-w\right)\zeta\left(s+w\right)\widetilde{V}\left(w\right)t^{-w}dw\\
 & =\frac{1}{2\pi i}\int_{\left(4\right)}\zeta\left(1-w\right)\zeta\left(s+w\right)\widetilde{V}\left(w\right)t^{-w}dw+\zeta\left(s\right)\widetilde{V}\left(0\right).
\end{split}
\end{equation}

Thus, by \eqref{eq:Ddef} and \eqref{eq:Dform} we get that
\[
\mathcal{R}_{t}\left(s\right)=D_{V,t}(s)-\zeta\left(s\right)\widetilde{V}\left(0\right)=\frac{1}{2\pi i}\int_{\left(4\right)}\zeta\left(1-w\right)\zeta\left(s+w\right)\widetilde{V}\left(w\right)t^{-w}dw
\]
so that
\begin{align*}
\int_{\mathbb{R}}\mathcal{R}_{t}\left(1\right)\frac{V\left(t\right)}{t}dt & =\frac{1}{2\pi i}\int_{\left(4\right)}\zeta\left(1-w\right)\zeta\left(1+w\right)\widetilde{V}\left(-w\right)\widetilde{V}\left(w\right)dw\\
 & =2\cdot\frac{1}{2\pi i}\int_{\left(2\right)}\zeta\left(1-2w\right)\zeta\left(1+2w\right)\widetilde{W}\left(\frac12 -w\right)\widetilde{W}\left(\frac12 +w\right)dw\\
 & =2c_{4}
\end{align*}
 where we used the identity $\widetilde{V}\left(w\right)=\widetilde{W}\left(\frac{1+w}{2}\right)$.
\end{proof}

\section{ The proof of Theorem \ref{thm:mainthm}, part (2)} \label{sec:proof}
\begin{proof}[Proof of Theorem \ref{thm:mainthm}, part (2)]
Applying Lemmas  \ref{lem:poissonagain} and \ref{lem:clean}, we obtain
\begin{equation} \label{eq:varclean}
\tmop{Var}(R,K)=
\frac{\pi R^3}{K^2}  \sum_{m,n \in \mathbb N } \, \sumstar_{\delta \in \mathbb Z^2 \setminus\{0\}} \frac{1 }{n\lVert \delta \rVert}  \int_{\mathbb R} V(t) V(\tfrac{m}{n} t) \, |\widehat f|^2\bigg(\frac{\pi m R \lVert \delta \rVert}{2K}t \bigg)\, dt
+O\bigg(\frac{R^{2+\varepsilon}}{K^2}+ \frac{1}{R^{1-\varepsilon}} \bigg).
\end{equation}
To evaluate the main term on the right-hand side of \eqref{eq:varclean} we apply Lemma \ref{lem:mainterm} to see that
\begin{equation}
\begin{split}
\tmop{Var}(R,K)=&\frac{\pi R^3}{K^2}\bigg(\frac{C_1}{\pi} \frac{K}{R} \log \frac{K}{R}+\frac{C_0}{\pi} \frac{K}{R}+ O \left( \left( \frac{K}{R}\right)^{\varepsilon} \right) \bigg)
+O\bigg(\frac{R^{2+\varepsilon}}{K^2}+ \frac{1}{R^{1-\varepsilon}} \bigg)\\
=& \frac{R^2}{K} \bigg(C_1 \log\frac{K}{R}+ C_0 +O\bigg(\frac{R^{1+\varepsilon}}{K}\bigg)\bigg),
\end{split}
\end{equation}
which completes the proof. \end{proof}

\section{Very narrow sectors}
\label{sec:VeryNarrowSec}

In this section we will prove the third part of Theorem \ref{thm:mainthm}, which deals with very narrow sectors. We will need several  preparatory lemmas.
\begin{lemma}
\label{lem:VarExp}Let $\eta>0$.
For $K>R^{2+\eta}$, we have that
\[
\textup{Var}\left(R,K\right)=\frac{\pi}{3}\cdot\frac{c_{1}}{K}\sum_{\substack{\alpha,\beta\in\mathbb{Z}^{2}\backslash\left\{ 0\right\} \\
\theta_{\alpha},\theta_{\beta}\in[0,\pi/2)\\
\theta_{\alpha}=\theta_{\beta}
}
}W\left(\frac{\left\Vert \alpha\right\Vert ^{2}}{R^{2}}\right)W\left(\frac{\left\Vert \beta\right\Vert ^{2}}{R^{2}}\right)+O\left( \frac{R^{4}}{K^{2}}\right),
\]
where $c_{1}$ is the constant defined in (\ref{eq:pre_consts}).
\end{lemma}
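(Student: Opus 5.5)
We start from the exact identity \eqref{eq:var-id} and expand $|S(k)|^2$ as a double sum over lattice points, which gives
\[
\tmop{Var}(R,K)=\frac{1}{K^2}\sum_{\alpha,\beta}W\Big(\frac{\lVert\alpha\rVert^2}{R^2}\Big)W\Big(\frac{\lVert\beta\rVert^2}{R^2}\Big)\sum_{k\neq 0}\Big|\widehat f\Big(\frac kK\Big)\Big|^2 e^{-4ik(\theta_\alpha-\theta_\beta)}.
\]
We then reinsert the $k=0$ term. It contributes $\frac{|\widehat f(0)|^2}{K^2}\big(\sum_\alpha W\big)^2\ll R^4/K^2$, which already fits inside the error term. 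Applying Poisson summation to the full $k$-sum (the same step as in the definition of $F_K$, now with $g=|\widehat f|^2$, whose Fourier transform is $f\ast f$ because $f$ is even and real) gives
\[
\frac1{K^2}\sum_{k\in\mathbb Z}|\widehat f|^2\Big(\frac kK\Big)e^{4ik\phi}=\frac1K\sum_{j\in\mathbb Z}(f\ast f)\Big(K\Big(j-\frac{2\phi}{\pi}\Big)\Big),\qquad \phi=\theta_\beta-\theta_\alpha .
\]

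Next we split the pairs $(\alpha,\beta)$ according to whether $\theta_\alpha\equiv\theta_\beta \pmod{\pi/2}$.

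If $\theta_\alpha\not\equiv\theta_\beta\pmod{\pi/2}$, we rotate $\beta$ by the appropriate multiple of $\pi/2$, exactly as in the proof of Lemma \ref{lem:poissonagain}. The integrality argument \eqref{angles_ineq} then shows that the distance from $\frac{2\phi}{\pi}$ to $\mathbb Z$ is $\gg 1/(\lVert\alpha\rVert\lVert\beta\rVert)\gg R^{-2}$. Since $K>R^{2+\eta}$, the argument of $f\ast f$ has size $\gg R^{\eta}$, so the rapid decay of $f\ast f$ makes each term $O(R^{-A})$. Summing over the $O(R^4)$ pairs keeps the total $O(R^{-A})$.

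If $\theta_\alpha\equiv\theta_\beta\pmod{\pi/2}$, only the term with argument $0$ survives, again up to $O(R^{-A})$. Each such pair therefore contributes $\frac{(f\ast f)(0)}{K}=\frac{1}{K}\int|f|^2$.

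Finally we reduce to the first quadrant. Each class modulo $\pi/2$ is represented by rotating $\alpha$ and $\beta$ independently into $\theta\in[0,\pi/2)$. Because $W(\lVert\cdot\rVert^2)$ is rotation invariant, each pair with $\theta_\alpha=\theta_\beta\in[0,\pi/2)$ corresponds to $4\cdot4=16$ pairs in the full sum. The main term is then
\[
\frac{16\int|f|^2}{K}\sum_{\theta_\alpha=\theta_\beta\in[0,\pi/2)}W\cdot W .
\]
It remains to check that $16\int|f|^2=\frac{\pi}{3}c_1$ using the definition \eqref{eq:pre_consts}. I cannot see that definition, so this normalization check is the step to verify carefully: I expect $c_1=\frac{48}{\pi}\int|f|^2$, or an equivalent expression.

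The only substantive step is the separation estimate for pairs that are not collinear modulo $\pi/2$. It is needed uniformly for $\lVert\alpha\rVert,\lVert\beta\rVert\ll R$, which follows from the support of $W$, and it goes through because $K$ exceeds $R^2$ by the factor $R^{\eta}$. The main error term is the reinserted $k=0$ contribution, of size $R^4/K^2$. No other error terms of comparable size arise.
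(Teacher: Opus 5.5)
Your proposal is correct and is essentially the paper's argument on the Fourier side. Your Poisson-in-$k$ formula $\frac1K\sum_j (f\ast f)\bigl(K(j-\frac{2\phi}{\pi})\bigr)$ is exactly the correlation integral $\int_0^{\pi/2}F_K(\theta-\theta_\alpha)F_K(\theta-\theta_\beta)\frac{d\theta}{\pi/2}$ that the paper estimates, the reinserted $k=0$ term is the subtracted squared mean, the quadrant reduction gives the same factor $16$, and indeed $c_1=\frac{48}{\pi}\int_{\mathbb R}|f|^2$, so $16\int_{\mathbb R}|f|^2=\frac{\pi}{3}c_1$.

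One bookkeeping correction: record the negligible terms as $O(K^{-A})$ rather than $O(R^{-A})$, because $R^{-A}$ is not absorbed into $R^4/K^2$ when $K$ grows faster than every power of $R$. This bound follows from your own estimates: off the diagonal the argument of $f\ast f$ is $\gg K/R^2\ge K^{\eta/(2+\eta)}$, on the diagonal it is $\ge K$ for $j\ne j_0$, and there are only $O(R^4)=O(K^2)$ pairs.
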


\begin{proof}
We write
\begin{align}
\textup{Var}\left(R,K\right) & =\int_{0}^{\pi/2}\left|\sum_{\alpha\in\mathbb{Z}^{2}\backslash\left\{ 0\right\} }F_{K}\left(\theta-\theta_{\alpha}\right)W\left(\frac{\left\Vert \alpha\right\Vert ^{2}}{R^{2}}\right)\right|^{2}\frac{d\theta}{\pi/2}-\left(\frac{\widehat{f}\left(0\right)}{K}\sum_{\alpha\in\mathbb{Z}^{2}\setminus\{0\}}W\left(\frac{\left\Vert \alpha\right\Vert ^{2}}{R^{2}}\right)\right)^{2}.\label{eq:Var_expansion}
\end{align}
Clearly,
\begin{equation}
\frac{\widehat{f}\left(0\right)}{K}\sum_{\alpha\in\mathbb{Z}^{2}\setminus\{0\}}W\left(\frac{\left\Vert \alpha\right\Vert ^{2}}{R^{2}}\right) = O\left( \frac{R^{2}}{K}\right).\label{eq:Gauss}
\end{equation}
As for the second moment, we have
\begin{alignat}{1}
\int_{0}^{\pi/2}\left|\sum_{\alpha\in\mathbb{Z}^{2}\backslash\left\{ 0\right\} }F_{K}\left(\theta-\theta_{\alpha}\right)W\left(\frac{\left\Vert \alpha\right\Vert ^{2}}{R^{2}}\right)\right|^{2} \frac{d\theta}{\pi/2} & =16\sum_{\substack{\alpha,\beta\in\mathbb{Z}^{2}\backslash\left\{ 0\right\} \\
\theta_{\alpha},\theta_{\beta}\in[0,\pi/2)
}
}W\left(\frac{\left\Vert \alpha\right\Vert ^{2}}{R^{2}}\right)W\left(\frac{\left\Vert \beta\right\Vert ^{2}}{R^{2}}\right)\label{eq:angles_expansion}\\
 & \times\int_{0}^{\pi/2}F_{K}\left(\theta-\theta_{\alpha}\right)F_{K}\left(\theta-\theta_{\beta}\right)\frac{d\theta}{\pi/2}.\nonumber
\end{alignat}

If $\theta_{\alpha}\ne\theta_{\beta}$, then as in \eqref{angles_ineq}, we have
\[
\left|\theta_{\beta}-\theta_{\alpha}\right|\ge\left|\sin\left(\theta_{\beta}-\theta_{\alpha}\right)\right|\ge\frac{1}{\left\Vert \alpha\right\Vert \left\Vert \beta\right\Vert }\gg R^{-2},
\]
and hence for any $\theta\in[0,\pi/2)$ either $\left|\theta-\theta_{\alpha}\right|\gg R^{-2}$
or $\left|\theta-\theta_{\beta}\right|\gg R^{-2}$. By the assumption $K>R^{2+\eta}$ and the rapid decay
of $f$, we then have for $\theta_{\alpha}\ne\theta_{\beta}$ that
\begin{equation}
\int_{0}^{\pi/2}F_{K}\left(\theta-\theta_{\alpha}\right)F_{K}\left(\theta-\theta_{\beta}\right)\frac{d\theta}{\pi/2}=
O\left(K^{-A}\right)\label{eq:off_diag}
\end{equation}
for any $A>0$.
If $\theta_{\alpha}=\theta_{\beta},$ then by Parseval's
theorem and the Poisson summation formula applied to the smooth function $\frac{1}{K} \left| \widehat{f}(\frac k K)\right|^{2}$
\begin{align}
\int_{0}^{\pi/2}\left|F_{K}\left(\theta\right)\right|^{2}\frac{d\theta}{\pi/2} & =\sum_{k\in\mathbb{Z}}\left|\widehat{F}_K\left(k\right)\right|^{2}=\frac{1}{K^{2}}\sum_{k\in\mathbb{Z}}\left|\widehat{f}\left(\frac{k}{K}\right)\right|^{2}=\frac{1}{K}\int_{\mathbb{R}}\left|\widehat{f}\left(x\right)\right|^{2}\,dx+O\left(K^{-A}\right)\label{eq:diag}\\
 & =\frac{1}{K}\int_{\mathbb{R}}\left|f\left(x\right)\right|^{2}\,dx+O\left(K^{-A}\right).\nonumber
\end{align}
The claim follows from combining (\ref{eq:Var_expansion}), (\ref{eq:Gauss}), (\ref{eq:angles_expansion}), (\ref{eq:off_diag}),
and (\ref{eq:diag}).
\end{proof}
\begin{lemma}
\label{lem:WWSum}We have that
\[
 \sum_{\substack{\alpha,\beta\in\mathbb{Z}^{2}\backslash\left\{ 0\right\} \\
\theta_{\alpha},\theta_{\beta}\in[0,\pi/2)\\
\theta_{\alpha}=\theta_{\beta}
}
}W\left(\frac{\left\Vert \alpha\right\Vert ^{2}}{R^{2}}\right)W\left(\frac{\left\Vert \beta\right\Vert ^{2}}{R^{2}}\right)= I\left(R\right),
\]
with
\begin{equation} \label{eq:I2def}
I\left(R\right)= \frac14 \cdot \frac{1}{2\pi i}\int_{\left(2\right)}\frac{1}{2\pi i}\int_{\left(2\right)}R^{2\left(s+s'\right)}\widetilde{W}\left(s\right)\widetilde{W}\left(s'\right)\zeta\left(2s\right)\zeta\left(2s'\right)E\left(s+s'\right)dsds'
\end{equation}
where $\widetilde{W}$ is the Mellin transform of $W$ and $E(s)$ is the Epstein zeta-function \eqref{eq:epstein}.
\end{lemma}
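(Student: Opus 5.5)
The plan is to parametrize the diagonal pairs explicitly and then apply Mellin inversion to each weight $W$ separately.

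\emph{Step 1: parametrize the pairs.} If $\alpha,\beta\in\mathbb Z^2\setminus\{0\}$ satisfy $\theta_\alpha=\theta_\beta\in[0,\pi/2)$, then both are positive integer multiples of the same primitive vector. So there is a unique primitive $\delta$ with $\theta_\delta\in[0,\pi/2)$ and unique $m,n\in\mathbb N$ such that
\[
\alpha=m\delta,\qquad \beta=n\delta .
\]
Conversely, every such triple $(\delta,m,n)$ gives an admissible pair. Hence the left-hand side equals
\[
\sumstar_{\substack{\delta\\ \theta_\delta\in[0,\pi/2)}}\ \sum_{m,n\ge1} W\Big(\frac{m^2\lVert\delta\rVert^2}{R^2}\Big)W\Big(\frac{n^2\lVert\delta\rVert^2}{R^2}\Big).
\]

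\emph{Step 2: apply Mellin inversion.} Since $W$ is smooth and compactly supported in $(0,\infty)$, its Mellin transform $\widetilde W$ is entire and decays rapidly in vertical strips. So we may write
\[
W(x)=\frac{1}{2\pi i}\int_{(2)}x^{-s}\widetilde W(s)\,ds
\]
for each of the two factors, using independent variables $s$ and $s'$. Substituting gives
\[
\frac{1}{(2\pi i)^2}\int_{(2)}\int_{(2)} R^{2(s+s')}\widetilde W(s)\widetilde W(s')\sum_{m,n}m^{-2s}n^{-2s'}\sumstar_{\theta_\delta\in[0,\pi/2)}\lVert\delta\rVert^{-2(s+s')}\,ds\,ds'.
\]
On the lines $\Re s=\Re s'=2$ we have $|m^{-2s}|=m^{-4}$, $|n^{-2s'}|=n^{-4}$ and $|\lVert\delta\rVert^{-2(s+s')}|=\lVert\delta\rVert^{-8}$. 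The sums over $m$, $n$ and $\delta$ therefore converge absolutely, and $\widetilde W(s)\widetilde W(s')$ is integrable on these lines. Fubini then justifies interchanging the sums and integrals.

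\emph{Step 3: identify the Dirichlet series.} The $m$-sum is $\zeta(2s)$ and the $n$-sum is $\zeta(2s')$. For the $\delta$-sum, rotation by $\pi/2$ preserves both primitivity and the norm. Each orbit of four primitive vectors contains exactly one vector with angle in $[0,\pi/2)$. Hence the restricted sum is $\tfrac14 E(s+s')$, with $E$ as in \eqref{eq:epstein}, and this produces exactly $I(R)$ as defined in \eqref{eq:I2def}.

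The only point needing real care is the bookkeeping in Step 1: one must check that the decomposition is bijective, in particular that restricting angles to $[0,\pi/2)$ forces $m,n>0$ rather than allowing signs. The quarter-plane orbit count in Step 3 is the other place where a factor could be lost. The analytic justification in Step 2 is routine by absolute convergence.
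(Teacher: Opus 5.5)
Your proposal is correct and follows essentially the same argument as the paper. The paper first writes the sum as $\frac14$ times the sum over all primitive $\alpha$ of $\bigl(\sum_{n\ge1}W(n^2\lVert\alpha\rVert^2/R^2)\bigr)^2$, then substitutes Mellin inversion for each factor and identifies $\zeta(2s)\zeta(2s')E(s+s')$. You do the same steps, except that you apply the quarter-turn orbit reduction at the end rather than the beginning, and you spell out the absolute-convergence justification that the paper leaves implicit.
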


\begin{proof}
Write
\begin{equation}
\sum_{\substack{\alpha,\beta\in\mathbb{Z}^{2}\backslash\left\{ 0\right\} \\
\theta_{\alpha},\theta_{\beta}\in[0,\pi/2)\\
\theta_{\alpha}=\theta_{\beta}
}
}W\left(\frac{\left\Vert \alpha\right\Vert ^{2}}{R^{2}}\right)W\left(\frac{\left\Vert \beta\right\Vert ^{2}}{R^{2}}\right)= \frac 14 \sideset{}{^{*}}\sum_{\substack{\alpha\in\mathbb{Z}^{2}\backslash\left\{ 0\right\} }
}\left(\sum_{n=1}^{\infty}W\left(\frac{n^{2}\left\Vert \alpha\right\Vert ^{2}}{R^{2}}\right)\right)^{2}\label{eq:WWsum}
\end{equation}
where the sum $\sideset{}{^{*}}\sum$ is taken over primitive lattice
points. By the Mellin inversion theorem, we have
\begin{equation}
W\left(\frac{n^{2}\left\Vert \alpha\right\Vert ^{2}}{R^{2}}\right)=\frac{1}{2\pi i}\int_{\left(2\right)}\frac{\widetilde{W}\left(s\right)R^{2s}}{n^{2s}\left\Vert \alpha\right\Vert ^{2s}}ds,\label{eq:W_Mellin}
\end{equation}
and the claim follows upon substituting (\ref{eq:W_Mellin}) in (\ref{eq:WWsum})
and using the formula \eqref{eq:epstein}.
\end{proof}
\begin{lemma}
\label{lem:I_asymp}Let $I(R)$ be as in \eqref{eq:I2def}.
We have that
\[
I\left(R\right)=\frac{3}{\pi}c_{2}R^{2}\log R+\frac{3}{\pi}c_{6}R^{2}+O\left(R\right)
\]
where $c_{2},c_{6}$ are the constants defined in (\ref{eq:pre_consts}).
\end{lemma}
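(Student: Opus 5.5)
We evaluate the double contour integral \eqref{eq:I2def} by shifting contours one variable at a time and collecting residues. The poles are at $s=\tfrac12$ from $\zeta(2s)$, at $s'=\tfrac12$ from $\zeta(2s')$, and on the line $s+s'=1$ from $E(s+s')$, which by \eqref{eq:epstein} is holomorphic for $\operatorname{Re}\ge \frac12$ except at $1$. Note that $\widetilde W$ is entire and decays rapidly in vertical strips, since $W$ is smooth and compactly supported on the positive reals. The main term $R^2\log R$ should come from a double pole at $s=s'=\tfrac12$ once we pass to the variable $u=s+s'$.

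First fix $s'$ on $\operatorname{Re} s'=2$ and move the $s$-contour from $\operatorname{Re}s=2$ to $\operatorname{Re} s=\tfrac14$. We pass $s=\tfrac12$, where $\zeta(2s)$ has residue $\tfrac12$; this gives the term
\[
\tfrac12 R^{1+2s'}\widetilde W(\tfrac12)\zeta(2s')E(s'+\tfrac12).
\]
We do not pass $s=1-s'$, since $\operatorname{Re}(1-s')=-1$. The remaining integral on $\operatorname{Re}s=\frac14$ keeps $\operatorname{Re}(s+s')=\frac94>1$.

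Next we shift $s'$ in both pieces to $\operatorname{Re}s'=\tfrac14$.

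In the single integral $\tfrac12\widetilde W(\tfrac12)\frac{1}{2\pi i}\int R^{1+2s'}\widetilde W(s')\zeta(2s')E(s'+\tfrac12)\,ds'$, the two factors $\zeta(2s')$ and $E(s'+\tfrac12)$ both have simple poles at $s'=\tfrac12$. Their product gives a double pole whose residue has the form $R^2(a\log R+b)$, using the Laurent expansions \eqref{eq:zeta} and \eqref{eq:Enear1} and the expansion of $R^{1+2s'}\widetilde W(s')$ to first order. Explicitly, $\zeta(2s')=\frac{1}{2(s'-\frac12)}+\gamma+\dots$ and $E(s'+\tfrac12)=\frac{6}{\pi(s'-\frac12)}+\dots$. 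The leading coefficient is therefore $\frac14\cdot\tfrac12\widetilde W(\tfrac12)^2\cdot\frac{6}{\pi}\cdot\tfrac12\cdot 2=\frac{3}{4\pi}\widetilde W(\tfrac12)^2\cdot\ldots$. This should match $\frac{3}{\pi}c_2$, and the constant term $\frac{3}{\pi}c_6$ collects $\gamma$, $\frac{L'}{L}(1,\chi_{-4})$, $\frac{\zeta'}{\zeta}(2)$ and $\widetilde W'(\tfrac12)/\widetilde W(\tfrac12)$.

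In the double integral with $\operatorname{Re}s=\tfrac14$, moving $s'$ to $\operatorname{Re}s'=\tfrac14$ passes $s'=\tfrac12$, with $\operatorname{Re}(s+s')=\frac34\ne 1$, and also the pole $s'=1-s$ of $E$, which lies at $\operatorname{Re}s'=\frac34$. The residue at $s'=\tfrac12$ gives a single integral in $s$ on $\operatorname{Re}s=\tfrac14$ of size $R^{1+1/2}$. That is not yet $O(R)$, so we move it further left to $\operatorname{Re}s=\varepsilon'$ or slightly negative. The integrand there is $\zeta(2s)E(s+\tfrac12)$, so we cross the pole of $E(s+\frac12)$ only at $s=\tfrac12$, which is already to the right. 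The residue at $s'=1-s$ gives $\frac{3}{\pi}R^2\int \widetilde W(s)\widetilde W(1-s)\zeta(2s)\zeta(2-2s)\,ds$ along $\operatorname{Re}s=\tfrac14$. This is a pure $R^2$ term and joins the constant $c_6$; after symmetrizing it is presumably where an integral constant like $c_4$ enters, by analogy with Lemma \ref{lem:mainterm}. The leftover double integral on $\operatorname{Re}s=\operatorname{Re}s'=\tfrac14$ is $O(R)$ because $|R^{2(s+s')}|=R$, and $E$ is holomorphic at $\operatorname{Re}(s+s')=\tfrac12$ by the remark after \eqref{eq:epstein}.

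Convergence throughout follows from the rapid decay of $\widetilde W$ against the polynomial growth of $\zeta$ and $L(s,\chi_{-4})$ in vertical strips. We also need $1/\zeta(2s+2s')$ bounded on $\operatorname{Re}(s+s')\ge\tfrac12$, which holds only for $\operatorname{Re}\ge\frac12$ with some care, and is the reason we stop at $\frac14+\frac14$ rather than going further left.

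The main obstacle is bookkeeping. We must track exactly which residues are crossed in which order, since the polar line $s+s'=1$ moves as $s$ varies. We then have to identify the resulting constants with $c_2$ and $c_6$ of \eqref{eq:pre_consts}, including the symmetrization that turns the $\zeta(2s)\zeta(2-2s)$ integral into the stated form. The error analysis itself is routine.
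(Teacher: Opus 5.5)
Your strategy is sound, but it orders the contour shifts differently from the paper. The paper keeps $\operatorname{Re}s=2$ and moves only $s'$, all the way to $\operatorname{Re}s'=-\frac32$, crossing $s'=\frac12$ and $s'=1-s$. This has three consequences. The leftover double integral is immediately $O(R^{4}\cdot R^{-3})=O(R)$. Only one copy of $\frac{R\widetilde W(1/2)}{8}\cdot\frac{1}{2\pi i}\int G(s)\,ds$ arises, where $G(s)=R^{2s}\widetilde W(s)\zeta(2s)E(s+\frac12)$. And the residue at $s'=1-s$, namely $\frac{3}{2\pi}R^{2}\widetilde W(s)\widetilde W(1-s)\zeta(2s)\zeta(2-2s)$, is integrated over $\operatorname{Re}s=2$; after $s=\frac12+w$ this is, up to a pole-free shift, the integral defining $c_4$.

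In your order you get two copies of the $G$-integral, one on $\operatorname{Re}=2$ and one on $\operatorname{Re}=\frac14$. Your $c_4$-type integral sits on $\operatorname{Re}w=-\frac14$, on the other side of the double pole at $w=0$ of $h(w)=\widetilde W(\frac12+w)\widetilde W(\frac12-w)\zeta(1+2w)\zeta(1-2w)$. This still works, but it needs an argument you only gesture at. Since $h$ is even, its residue at $w=0$ vanishes, so $\int_{(-1/4)}h=\int_{(1/4)}h=\int_{(2)}h$. This gives $\frac{3}{2\pi}c_4R^2=\frac3\pi\cdot\frac{c_4}{2}R^2$, as $c_6$ requires. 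The prefactor is $\frac14\cdot\frac6\pi=\frac{3}{2\pi}$, not the $\frac3\pi$ you wrote; with $\frac3\pi$ you would get $c_4$ rather than $\frac{c_4}{2}$ inside $c_6$. Your leading coefficient $\frac{3}{4\pi}\widetilde W(\frac12)^2=\frac3\pi c_2$ is correct.

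Two error-term steps fail as written.

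First, you move the step-one single integral $\frac{R\widetilde W(1/2)}{8}\cdot\frac{1}{2\pi i}\int G(s')\,ds'$ to $\operatorname{Re}s'=\frac14$ but never bound the remaining line integral. The trivial bound on that line is only $O(R^{3/2})$.

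Second, for the other copy you propose $\operatorname{Re}s=\varepsilon'$ ``or slightly negative''. The first gives $O(R^{1+2\varepsilon'})$, not $O(R)$. The second is not available unconditionally: $E(s+\frac12)$ contains $1/\zeta(2s+1)$, and for $\operatorname{Re}s<0$ a fixed vertical line lies inside the critical strip, where zeros of $\zeta$ cannot be excluded.

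The fix for both copies is to shift exactly to $\operatorname{Re}s=0$. $G$ has no poles in $0\le\operatorname{Re}s<\frac12$, and on that line $|R^{2s}|=1$. Moreover $\zeta(2s)$ and $\zeta(\frac12+it)L(\frac12+it,\chi_{-4})$ grow at most polynomially, and $1/\zeta(1+2it)\ll\log(2+|t|)$. So each copy is $O(R)$ after multiplying by the prefactor $R$; this is exactly the paper's \eqref{eq:G_int}.

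With these repairs, and the explicit Laurent computation at $s=\frac12$ (which yields $\frac3\pi c_2R^2\log R+\frac3\pi\bigl(\frac23c_2(\cdots)+c_3\bigr)R^2$), your argument proves the lemma.
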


\begin{proof}
We have that
\begin{equation}
I\left(R\right)=\frac{1}{2\pi i}\int_{\left(2\right)}R^{2s}\widetilde{W}\left(s\right)\zeta\left(2s\right)\left(\frac{1}{2\pi i}\int_{\left(2\right)}F_{s}\left(s'\right)ds'\right)ds\label{eq:I_def}
\end{equation}
where
\[
F_{s}\left(s'\right)=\frac14R^{2s'}\widetilde{W}\left(s'\right)\zeta\left(2s'\right)E(s+s').
\]
Consider the inner integral in (\ref{eq:I_def}). By the rapid decay
of $\widetilde{W}(\sigma+i\tau)$ in the variable $\tau$
in any fixed strip, we may shift the integral to $\sigma'=-3/2$, and pick up the residues from simple poles
at $s'=1/2$ and $s'=1-s$. It follows that
\begin{equation} \label{eq:Fres1}
\frac{1}{2\pi i}\int_{\left(2\right)}F_{s}\left(s'\right)ds'=\textup{Res}_{s'=1/2}F_{s}\left(s'\right)+\textup{Res}_{s'=1-s}F_{s}\left(s'\right)+O\left(R^{-3}\left(1+|s|\right)^{d}\right)
\end{equation}
 for some $d>0$. A simple calculation gives
\begin{equation} \label{eq:Fres2}
\textup{Res}_{s'=1-s}F_{s}\left(s'\right)=\frac{3}{2\pi}R^{2-2s}\widetilde{W}\left(1-s\right)\zeta\left(2-2s\right)
\end{equation}
and
\begin{equation} \label{eq:Fres3}
\textup{Res}_{s'=1/2}F_{s}\left(s'\right)=\frac18 R\,\widetilde{W}\left(\frac12 \right)E\left(s+\tfrac12\right).
\end{equation}

Substituting \eqref{eq:Fres2} and \eqref{eq:Fres3} in \eqref{eq:Fres1} then applying the resulting formula in (\ref{eq:I_def}) and shifting the contour, we get that
\begin{equation}
I\left(R\right)=\frac{\widetilde{W}\left(\frac12\right)}{8}R\cdot\frac{1}{2\pi i}\int_{\left(2\right)}G\left(s\right)ds+\frac{3}{\pi}\frac{c_{4}}{2}R^{2}+O\left(R\right)\label{eq:I_outer}
\end{equation}
where
\[
G\left(s\right)= R^{2s}\widetilde{W}\left(s\right)\zeta\left(2s\right)E\left(s+\tfrac12\right).
\]
Shifting the contour and picking up the residue at $s=\frac12$, we get
\begin{equation}
\frac{1}{2\pi i}\int_{\left(2\right)}G\left(s\right)ds=\frac{1}{2\pi i}\int_{\left(0\right)}G\left(s\right)ds+\textup{Res}_{s=\frac12}G\left(s\right)=\textup{Res}_{s=\frac12}G\left(s\right)+O\left(1\right).\label{eq:G_int}
\end{equation}
Around $s=\frac12$ we have that
\begin{equation} \label{eq:Gres1}
\zeta\left(2s\right)=\frac{1}{2\left(s-\tfrac12\right)}+\gamma+O\left(|s-\tfrac12|\right)
\end{equation}
and by \eqref{eq:Enear1}
\begin{equation} \label{eq:Gres2}
E(s+\tfrac12)=\frac{6}{\pi(s-\frac12)}+\frac{6}{\pi} \bigg(\gamma+\frac{L'}{L}(1,\chi_{-4})-2\frac{\zeta'}{\zeta}(2)\bigg)+O(|s-\tfrac12|).
\end{equation}
Additionally, near $s=\frac12$
\begin{equation} \label{eq:Gres3}
R^{2s} \widetilde{W}(s)=R\widetilde{W}\left(\frac12\right)+(2R\log R\widetilde{W}\left(\frac12\right) + \widetilde{W}'\left(\frac12\right)R)\left(s-\tfrac12\right)+O\left(\left|s-\tfrac12\right|^{2}\right).
\end{equation}

Thus, we conclude using \eqref{eq:Gres1}, \eqref{eq:Gres2}, and \eqref{eq:Gres3} 
\begin{align}
\textup{Res}_{s=\frac12}G\left(s\right) & =\frac{6}{\pi} \widetilde{W}\left(\frac12\right) R\log R+\frac{6}{\pi}\left(\frac{2}{3} \widetilde{W}\left(\frac12\right) \left(\frac{9\gamma}{4}+\frac{3L'\left(1,\chi_{-4}\right)}{\pi}-\frac{9\zeta'\left(2\right)}{\pi^{2}}\right)+\frac{\widetilde{W}'\left(\frac12\right)}{2}\right)R.\label{eq:ResG}
\end{align}
The claim now follows upon noting that
\begin{align*}
\widetilde{W}\left(\frac12\right)&=\int_{0}^{\infty}\frac{1}{\sqrt{x}}W\left(x\right)dx=2\int_{0}^{\infty}W\left(u^{2}\right)\,du,
\\
\widetilde{W}'\left(\frac12\right)&=\int_{0}^{\infty}\frac{1}{\sqrt{x}}W\left(x\right) \log x dx=4\int_{0}^{\infty}W\left(u^{2}\right) \log u\,du,
\end{align*}
and combining (\ref{eq:I_outer}), (\ref{eq:G_int}),
and (\ref{eq:ResG}).
\end{proof}

\begin{proof}[Proof of Theorem \ref{thm:mainthm}, part (3)]

Assume that $K>R^{2+\eta}$. The proof of the third part of Theorem
\ref{thm:mainthm} follows by combining Lemma \ref{lem:VarExp}, Lemma
\ref{lem:WWSum} and Lemma \ref{lem:I_asymp}.
\end{proof}

\appendix

\section{\label{sec:Constants}Constants}

Denote by $\widetilde{W}$ the Mellin transform of $W$, $\widehat{f}$ the Fourier transform of $f$, $\zeta(s)$ the Riemann zeta-function,
$L(s,\chi_{-4})$  the Dirichlet $L$-function attached to the non-principal character $\pmod 4$, and let $\gamma=0.57721\ldots$ denote Euler's constant. The following constants are used throughout the paper.

\begin{align}
c_{1} & =\frac{48}{\pi}\int_{\mathbb{R}}\left|f\left(x\right)\right|^{2}\,dx,\nonumber \\
c_{2} & =\left(\int_{0}^{\infty}W\left(u^{2}\right)\,du\right)^{2},\nonumber \\
c_{3} & =\int_{0}^{\infty}W\left(u^{2}\right)\,du\int_{0}^{\infty}W\left(u^{2}\right)\log u\,du,\label{eq:pre_consts}\\
c_{4} & =\frac{1}{2\pi i}\int_{\left(2\right)}\widetilde{W}\left(\frac12 +s\right)\widetilde{W}\left(\frac12-s\right)\zeta\left(1+2s\right)\zeta\left(1-2s\right)ds,\nonumber \\
c_{5} & =c_{1}c_{2}\log\left(\frac{2}{\pi} \right)+\frac{48}{\pi}c_{2}\int_{\mathbb{R}}|\widehat{f}|^{2}\left(x\right)\log\left|x\right|\,dx-2c_{1}c_{3},\nonumber \\
c_{6} & =\frac{2}{3}c_{2}\left(\frac{9\gamma}{4}+\frac{3L'\left(1,\chi_{-4}\right)}{\pi}-\frac{9\zeta'\left(2\right)}{\pi^{2}}\right)+c_{3}+\frac{c_{4}}{2},\nonumber \end{align}
and
\begin{align}
C_{1} & =c_{1}c_{2}=\frac{48}{\pi}\int_{\mathbb{R}}\left|f\left(x\right)\right|^{2}\,dx\left(\int_{0}^{\infty}W\left(u^{2}\right)\,du\right)^{2},\label{eq:main_consts}\\
D_{0} & =c_{1}c_{6},\nonumber \\
C_{0} & =D_{0}+c_{5}.\nonumber
\end{align}

\section*{Acknowledgements}
We express our deep gratitude to Alexei Entin, Bingrong Huang, and Igor Wigman for stimulating and insightful discussions on various aspects of our work.

Gemini, ChatGPT, and Claude were used to assist with proofreading, background/literature review, and to typeset handwritten notes into LaTeX.

\end{document}